\documentclass[twoside]{amsart}
\usepackage{latexsym}
\usepackage{enumitem}
\usepackage{amssymb,amsmath,amsopn}
\usepackage[dvips]{graphicx}   
\usepackage{color,epsfig}      
\usepackage{tikz} 
\usepackage{subcaption} 
\usepackage{array}
\usepackage{hyperref} \usepackage{mathtools}

\theoremstyle{plain}
\newtheorem{theorem}{Theorem}[section]
\newtheorem{lemma}[theorem]{Lemma}
\newtheorem{conjecture}{Conjecture}
\newtheorem{corollary}[theorem]{Corollary}

\newtheorem{remark}[theorem]{Remark}
\theoremstyle{definition}
\newtheorem{definition}[theorem]{Definition}
\numberwithin{equation}{section}

\renewcommand{\Re}{\mathbb R}

\renewcommand{\S}{\mathbb S}

\newcommand{\B}{\mathbf{B}}

\newcommand{\BB}{\mathbf B}

\newcommand{\HHH}{\mathbb{H}}
\newcommand{\Sph}{\mathbb{S}}
\newcommand{\MM}{\mathbb{M}}
\newcommand{\V}{\mathbb{V}}

\DeclareMathOperator{\arctanh}{arctanh}

\DeclareMathOperator{\inter}{int}
\DeclareMathOperator{\bd}{bd}

\DeclareMathOperator{\conv}{conv}

\DeclareMathOperator{\vol}{vol}

\DeclareMathOperator{\arcosh}{arcosh}
\begin{document}

\title[Equilibria]{Equilibria in non-Euclidean geometries}

\author[Z. L\'angi]{Zsolt L\'angi}
\author[S. Wang]{Shanshan Wang}

\address{Zsolt L\'angi, Bolyai Institute, University of Szeged\\
Aradi v\'ertan\'uk tere 1, H-6720 Szeged, Hungary, and\\
Alfr\'ed R\'enyi Institute of Mathematics\\
Re\'altanoda utca 13-15, H-1053, Budapest, Hungary}
\email{zlangi@server.math.u-szeged.hu}
\address{Shanshan Wang, Bolyai Institute, University of Szeged\\
Aradi v\'ertan\'uk tere 1, H-6720 Szeged, Hungary}
\email{shanshanwang@server.math.u-szeged.hu}

\thanks{Partially supported by the ERC Advanced Grant ``ERMiD'' and the National Research, Development and Innovation Office, NKFI, K-147544 grant, and the Project no. TKP2021-NVA-09 with the support provided by the Ministry of Innovation and Technology of Hungary from the National Research, Development and Innovation Fund and financed under the TKP2021-NVA funding scheme.}

\subjclass[2020]{70G45, 52A15, 53A35, 53Z05}
\keywords{convex body, centroid, equilibrium, spherical space, hyperbolic space, normed space, G\"omb\"oc, mono-monostatic body}

\begin{abstract}
In this paper, extending the work of Gal'perin (Comm. Math. Phys. 154: 63-84, 1993), we investigate generalizations of the concepts of centroids and static equilibrium points of a convex body in spherical, hyperbolic and normed spaces. In addition, we examine the minimum number of equilibrium points a $2$- or $3$-dimensional convex body can have in these spaces. In particular, we show that every plane convex body in any of these spaces has at least four equilibrium points, and that there are mono-monostatic convex bodies in $3$-dimensional spherical, hyperbolic, and certain normed spaces. Our results are generalizations of results of Domokos, Papadopoulos and Ruina (J. Elasticity 36: 59-66, 1994), and V\'arkonyi and Domokos (J. Nonlinear Sci. 16: 255-281, 2006) for convex bodies in Euclidean space.
\end{abstract}

\maketitle

\section{Introduction}\label{sec:intro}

Centroids and static equilibrium points of convex bodies have been in the focus of research since the work of Archimedes, whose results were used in naval design even in the 18th century \cite{Archimedes1}. These concepts appear not only in physics, but also in engineering \cite{robotics, varkonyirobotics}, geology \cite{pebbles, korvin}, biology \cite{turtles, turtles2}, medicine \cite{pill} and in many other disciplines. Among the classical problems in pure mathematics related to these concepts we can mention problems related to floating bodies ( see e.g. \cite{scottish, ryabogin, huang}), Gr\"unbaum's inequality \cite{grunbaum} and its variants, and the Busemann-Petty centroid inequality \cite{petty} and its variants. There are also many mathematical results related to the number of equilibrium points of convex bodies. As examples, we may mention the construction of a double-tipping tetrahedron by Heppes \cite{Heppes}, or the monostable polyhedron of Conway and Guy in \cite{GG}. Recent solutions of two problems on monostable polyhedra \cite{Langi22} by the first-named author, proposed by Conway and Guy \cite{GG} in 1969  are also worth mentioning here. As for general convex bodies, a result of Domokos, Papadopoulos and Ruina \cite{DPR} states that a nondegenerate convex body in the Euclidean plane has at least four equilibria, whereas the construction of a mono-monostatic convex body in \cite{VD} by V\'arkonyi and Domokos, called G\"omb\"oc, shows that the same statement does not hold for $3$-dimensional convex bodies. We note that the latter body, the existence of which was conjectured by Arnold \cite{VD2}, is well known outside the mathematics and physics community and has found applications in many other disciplines\footnote{For example, it was chosen to symbolize monetary stability by the National Bank of Hungary, and also as the insignia for the Steven Smale Prize by the Society for the Foundations of Computational Mathematics}. For more information on the mathematical aspects of centroids and equilibria, the interested reader is referred to the recent survey \cite{langivarkonyi}.

In the past century, numerous attempts have been made to generalize the notions of centroids to non-Euclidean geometries, mostly involving spherical and hyperbolic spaces. In particular, centroids of spherical triangles have been investigated by Fog \cite{Fog} and Fabricius-Bjerre \cite{Bjerre} in the 1940s. Their definition uses the model of spherical space as a hypersurface embedded in a Euclidean space (for such an approach, see also \cite{Brock}). In 1993, Gal'perin \cite{Galperin} gave a thorough investigation of possible definitions of centroids of systems of points in spaces of constant curvature, most importantly in spherical and hyperbolic space.  His results were recently generalized for $k$-dimensional manifolds in these spaces in \cite{CC}. A recent paper of Besau et al. \cite{BHPS} independently defines the centroid of a spherical set in the same way.

The main goal of this paper is to extend the above investigation to equilibrium properties of convex bodies in non-Euclidean spaces, more specifically in spherical, hyperbolic and normed spaces. In particular, we investigate how the results in \cite{DPR} and \cite{VD} can be generalized for convex bodies in these spaces. During this investigation, we always imagine a $d$-dimensional normed space to be equipped with an underlying Euclidean space.

Our two main results are as follows.

\begin{theorem}\label{thm:2d}
Let $\V^2$ be a plane of constant curvature, or a normed plane with a smooth and strictly convex unit disk. Let $K$ be a plane convex body in $\mathbb{V}$. Then $K$ has at least four equilibrium points.
\end{theorem}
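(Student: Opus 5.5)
The plan is to reduce everything to one \emph{polar-coordinate moment identity} about the centroid, and then run the Domokos--Papadopoulos--Ruina argument on the radial function. Fix the centroid $c$ of $K$ (in the sense defined earlier in the paper for the space $\V^2$, following Gal'perin for constant curvature, and the normed variant for normed planes). Describe $\bd K$ in polar coordinates about $c$: $\theta\mapsto \rho(\theta)$, $\theta\in[0,2\pi)$, where $\rho(\theta)$ is the distance from $c$ to the boundary point in direction $\theta$. Here distance means the spherical or hyperbolic distance, and in the normed case the norm distance, with directions parametrized by the unit circle of the norm.

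\textbf{Step 1: equilibria are critical points of $\rho$.} I would first verify that, in each geometry, a boundary point $p$ is an equilibrium point exactly when $\theta\mapsto\rho(\theta)$ is critical at the corresponding direction. Geometrically this means the geodesic $cp$ is normal to $\bd K$ at $p$; in the normed case, $\bd K$ is tangent at $p$ to the norm sphere centred at $c$. In the normed case the smoothness and strict convexity of the unit disk are what make this equivalence valid and the radial parametrization continuous. In particular, the global maximum and minimum of $\rho$ already give two equilibria. If $\rho$ has infinitely many critical points we are done, so assume finitely many. Then local maxima and minima alternate, and it suffices to exclude the case of exactly one local maximum $\theta_M$ and one local minimum $\theta_m$.

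\textbf{Step 2: the moment identity.} Next I would rewrite the defining property of $c$, namely the vanishing of the appropriate first moment, in polar coordinates about $c$. The target form is
\[
\int_0^{2\pi} F\bigl(\rho(\theta)\bigr)\, w(\theta)\, d\theta = 0 ,
\]
where $F$ is continuous and strictly increasing. For spherical space, via the embedding in $\Re^3$, $F(\rho)=\int_0^\rho \sin^2 r\,dr$. For hyperbolic space, via the hyperboloid model, $F(\rho)=\int_0^\rho \sinh^2 r\,dr$. In the normed case $F(\rho)=\rho^3/3$ up to the Jacobian of norm-polar coordinates, with $w(\theta)$ the corresponding unit-vector function. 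The property needed of $w$ is that for every line $\ell$ through $c$, $\langle w(\theta),n_\ell\rangle$ has constant sign (strict in the interior) on each open half-circle determined by $\ell$, and $\int_0^{2\pi} w\,d\theta=0$ (possibly after an absorbed weight). Checking this form carefully in each of the three settings is the step I expect to be the main obstacle, especially in the normed case, where the polar Jacobian depends on $\theta$ and must be absorbed into $w$ without destroying the sign and zero-integral properties.

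\textbf{Step 3: the contradiction.} Suppose $\rho$ is strictly increasing on one arc from $\theta_m$ to $\theta_M$ and strictly decreasing on the other. For each level $t\in(\rho(\theta_m),\rho(\theta_M))$, the set $\{\rho>t\}$ is a single open arc with endpoints $a(t)$ and $b(t)$, and its angular length varies continuously from $2\pi$ to $0$. By the intermediate value theorem there is a level $t$ at which $a(t)$ and $b(t)$ are antipodal directions, so they span a line $\ell$ through $c$. Then $F(\rho(\theta))-F(t)$ is positive on one open half-circle and negative on the other. Using $\int w\,d\theta=0$,
\[
\Bigl\langle \int_0^{2\pi} F(\rho)\,w\,d\theta,\; n_\ell\Bigr\rangle=\int_0^{2\pi}\bigl(F(\rho)-F(t)\bigr)\langle w,n_\ell\rangle\, d\theta ,
\]
and the right-hand side is strictly positive for a suitable choice of sign of $n_\ell$. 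This contradicts Step 2. Hence $\rho$ has at least two local maxima and two local minima, that is, $K$ has at least four equilibrium points.
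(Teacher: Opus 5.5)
Your proposal is correct and is essentially the paper's proof. You reduce to a radial function about $c(K)$ with a single local maximum and minimum, find a line through $c(K)$ whose open sides contain exactly the directions with $\rho<t$ and with $\rho>t$ (the paper gets this antipodal pair from Borsuk--Ulam), and conclude that the first moment about that line is nonzero; the paper verifies this last step via the point reflection $h$ about $c(K)$ (comparing $K\setminus h(K)$ with $h(K)\setminus K$), where you use the explicit polar formula $\int F(\rho)\,w\,d\theta$. Two small points: since $K$ need not be smooth, you should state only that each local extremum of $\rho$ is an equilibrium (via the disk centred at $c(K)$ touching $\bd K$, as the paper does) rather than an equivalence with ``critical points''; and Step~2 is no real obstacle in the normed case, because $w(\theta)=\rho_M(\theta)^3(\cos\theta,\sin\theta)$ satisfies $w(\theta+\pi)=-w(\theta)$ by the central symmetry of $M$.
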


We note that, unlike in the paper \cite{DPR}, we do not make regularity assumptions on $K$. Thus, our result (in this sense) is a generalization of the result in \cite{DPR} even for convex bodies in the Euclidean plane.

Before the next theorem, recall that for any metric space with metric $d(\cdot,\cdot)$, the Hausdorff distance of two compact sets $A,B$ is defined as
\begin{multline*}
d_H(A,B)= \inf \{ d \geq 0: \hbox{ for any } a \in A \hbox{ there is some } b \in B \hbox{ with } d(a,b) \leq d,\\
\hbox{ and } \hbox{ for any } b \in B \hbox{ there is some } a \in A \hbox{ with } d(a,b) \leq d \}.
\end{multline*}
In Theorem~\ref{thm:3d}, for $d \geq 3$, we call a $d$-dimensional normed space \emph{rotationally symmetric} if there is a  $(d-2)$-dimensional linear subspace $L$ in it such that the norm is invariant under any rotation around $L$.

\begin{theorem}\label{thm:3d}
Let $\V^3$ be a
\begin{enumerate}
\item[(a)] a $3$-dimensional space of constant curvature; or
\item[(b)] a $3$-dimensional rotationally symmetric normed space with a unit ball $M$. In this case we assume that $K$ has $C^2$-class boundary and strictly positive Gaussian curvature everywhere.
\end{enumerate}
Then, for every closed ball $\BB$ in $\V^3$ and every $\varepsilon > 0$ there is a mono-monostatic convex body $K$ in $\V^3$ with $d_H(\BB,K) \leq \varepsilon$.
\end{theorem}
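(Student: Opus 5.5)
We follow the Varkonyi--Domokos approach: write $K$ as a small perturbation of the ball $\BB$, and reduce the count of equilibria to the critical points of a function on $\S^2$.

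\textbf{Setup.} Fix a center $o$ of $\BB$, and describe star-shaped bodies around $o$ by a radial function $R(u)=r+\delta f(u)$ for $u\in\S^2$ and small $\delta>0$.
In case (a), $u$ is a unit tangent vector at $o$ and the boundary point is $\exp_o(R(u)u)$.
In case (b), we use the underlying Euclidean structure, so $o+R(u)u$.
If $f$ is smooth and $\delta$ is small, $K_\delta$ is convex, since the ball has strictly positive curvature. Then $d_H(\BB,K_\delta)=O(\delta)$.
Equilibrium points are the boundary points $p$ at which the (geodesic, resp.\ normed) normal direction passes through the centroid $c(K_\delta)$. Equivalently, they are the critical points on $\bd K_\delta$ of the distance from $c(K_\delta)$, measured in the appropriate sense. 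In the normed case this is the distance whose gradient is the normal $N$ dual to $M$.

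\textbf{First-order expansion.} We expand $c(K_\delta)=o+\delta c_1(f)+O(\delta^2)$. Here $c_1$ is a linear functional of $f$; by symmetry it involves only the first spherical harmonic component of $f$, possibly reweighted by a curvature- or norm-dependent factor.
Equilibria of $K_\delta$ then correspond to critical points of
\[
g_\delta(u)=f(u)-\langle c_1(f),u\rangle+O(\delta)
\]
on $\S^2$. The $O(\delta)$ term is small in $C^1$ (in $C^2$, given $C^2$-regularity of $f$).

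\textbf{Choice of $f$.} We choose $f$ to have no first-harmonic component, so that $c_1(f)=0$. The centroid is then fixed to first order, and we need $f$ to have exactly two critical points, both nondegenerate.
This is exactly the construction of \cite{VD}. There one finds smooth functions on $\S^2$ that are orthogonal to the first harmonics and have a single maximum and a single minimum, all other level sets being circles. One example adapts the flattened profile of the G\"omb\"oc.
The difficulty is that the $O(\delta)$ error might create new critical points wherever $\nabla f$ is small. Only nondegenerate critical points survive perturbation by stability, so $f$ must be Morse with exactly two critical points.
The resolution in \cite{VD} is to first replace $f$ by a function whose gradient is bounded below away from the poles. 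One then builds the perturbation so that the body is symmetric under rotations around an axis through the two poles, or at least $\mathbb{Z}_k$-symmetric with $k\ge 3$. This forces the centroid onto the axis exactly, so that only the radial derivative along meridians matters.

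\textbf{Symmetry in each geometry.} In case (a), rotations about a geodesic through $o$ are isometries, so a $\mathbb{Z}_k$-symmetric $K$ has its centroid on the axis exactly. This holds for any of the Gal'perin-type centroid definitions, since they are isometry-equivariant.
In case (b), we use the rotational symmetry of the norm about $L$. We take the axis to be the line through $o$ spanned by $L$ (a line, since $d=3$), so that rotations about it preserve both the norm and $K$. The centroid is then again forced onto the axis. Here the smoothness and positive curvature hypotheses make the normal map $C^1$, which gives a well-defined equilibrium condition.

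\textbf{Main obstacle.} With the centroid on the axis, we must still show that the symmetric body has exactly one stable and one unstable point. Away from the axis we do this by checking that the meridian derivative of $g_\delta$ does not vanish, which is a perturbation of the corresponding Euclidean statement for the G\"omb\"oc-type $f$. Near the two poles we use nondegeneracy of the Hessian.
This step is the hardest. The non-Euclidean centroid shifts the centroid along the axis by an $O(\delta)$ amount that is not present in the Euclidean case. We intend to absorb this shift by choosing the profile of $f$ with the gradient lower bound uniform, so that an $O(\delta)$ axial shift of the centroid changes the meridian derivative only by $O(\delta)$.
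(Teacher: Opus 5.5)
\textbf{The centering step fails as you propose it.} By your own expansion, equilibria of $K_\delta$ are the critical points of $f(u)-\langle c_1(f),N(u)\rangle+O(\delta)$, with $N(u)=u$ in case (a). So a first-order displacement $\delta c_1$ of the centroid enters at exactly the same order as the perturbation $\delta f$. An axial shift $\delta\kappa e_3$ replaces the leading function $f$ by $f-\kappa u_3$ (by $f-\kappa\langle N(u),e_3\rangle$ in case (b)). Here $\kappa$ is an order-one weighted first moment of $f$, and the meridian derivative you want to control is itself only $O(\delta)$. A lower bound on $|\nabla f|$ away from the poles therefore cannot absorb this shift.

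The perturbative argument closes only if $c_1(f)=0$ exactly. Then the centroid moves by $O(\delta^2)$, the remainder is $O(\delta)$ in $C^2$, and $C^2$-stability of a Morse function with two critical points finishes. In case (a) this is automatic: by isotropy of the geodesic ball, $c_1(f)$ is a fixed multiple of $\int_{\S^2}f(u)u\,d\sigma(u)$. So the ``non-Euclidean $O(\delta)$ shift'' you worry about does not exist there. Your first-order argument then gives a valid alternative to the paper's route (projection to a Euclidean star body, exact centering, exact equilibrium count). For that you must actually exhibit a $C^2$ function with exactly two nondegenerate critical points and $\int f u\,d\sigma=0$ rather than attribute it to \cite{VD}. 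For example, take $\rho_{c^*}$ from the family in the paper, with $c^*$ obtained from the intermediate value theorem between $c=1$ and $c\to0^+$. Note also that full rotational symmetry of $K$ is impossible. A body of revolution whose only equilibria are the two poles has a meridian-monotone profile, hence nonzero first moment about the equatorial plane through its centroid.

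In case (b) the gap is genuine. First, $o+(r+\delta f(u))u$ perturbs a Euclidean ball, not $\BB=rM$, so you need the radial function $r\rho_M(u)(1+\delta f(u))$. With that choice, $c_1(f)$ is proportional to $\int_{\S^2}\rho_M^4(u)f(u)u\,d\sigma(u)$, so orthogonality of $f$ to the first harmonics does not give $c_1(f)=0$. You need a function with exactly two nondegenerate critical points satisfying the weighted condition $\int\rho_M^4 f u_3\,d\sigma=0$, and nothing in your proposal produces one.

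This is precisely the heart of the paper's proof. For $K(c,d)$ with radial function $R\rho_M(u)(1+d\rho_c(u))$, the equilibria with respect to $o$ are exactly the critical points of $\rho_c$, namely the two poles by \cite{DLV}. The weighted first moment is positive for $c=1$ and negative as $c\to0^+$, and the intermediate value theorem gives $c=F(d)$ with centroid exactly at $o$. The negativity is where rotational symmetry of the norm is really used. Since $\rho_M$ does not depend on $\varphi$, one integrates the limit $\rho_0$ in $\varphi$ first, obtaining $-8\pi^2\theta/(\pi^2+4\theta^2)$, and the sign survives the even weight $\rho_M^4$. Using the symmetry only to put the centroid on the axis is not enough; a half-turn already does that, and the real issue is where on the axis the centroid lies.
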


We note that our condition in (b) about the rotational symmetry of the space seems to be technical. We make the following conjecture.

\begin{conjecture}\label{conj:normedGomboc}
In any $3$-dimensional normed space $\MM^3$ whose unit ball has $C^2$-class differentiable boundary and strictly positive Gaussian curvature, there is a mono-monostatic convex body.
\end{conjecture}




The structure of the paper is as follows. In Section~\ref{sec:prelim} we present the necessary background for the proofs and discuss how to define centroids and equilibria in non-Euclidean geometries. In Section~\ref{sec:2d} we prove Theorem~\ref{thm:2d}. Finally, in Section~\ref{sec:3d} we prove Theorem~\ref{thm:3d}.

We note that whereas the main result of this paper is the generalization of the Euclidean results in \cite{DPR, VD} for non-Euclidean geometries, it seems reasonable to assume that the tools and approach presented in this paper permits to find non-Euclidean variants of many results about the equilibrium properties of convex bodies in these geometries.

\section{Preliminaries}\label{sec:prelim}

In this paper, we denote the $d$-dimensional Euclidean space by $\Re^d$. For any two points $p,q \in \Re^d$, we denote by $[p,q]$ the closed segment with endpoints $p,q$, and by $|p|$ the Euclidean norm of $p$. The \emph{(Euclidean) distance} of $p,q \in \Re^d$ is $d_E(p,q)=|q-p|$. We denote the closed $d$-dimensional unit ball centered at the origin $o$ by $\B^d$, and its boundary by $\Sph^{d-1}$. Furthermore, for any set $S \subset \Re^3$ we let $\conv (S)$ denote the convex hull of $S$. By a convex body we mean a compact, convex set with nonempty interior.
In this space, $d$-dimensional Lebesgue measure is called \emph{(Euclidean) volume}, denoted by $\vol_d^E(\cdot)$. To distinguish it from non-Euclidean volumes, in the proofs we also use the notation $\lambda_d(\cdot)$ for it. In the paper we also use the standard notation $\inter(\cdot)$, $\bd(\cdot)$ for the interior and the boundary of a set.

The classical model of the $d$-dimensional spherical space is the set $\Sph^d$ defined as the unit sphere of $\Re^{d+1}$. Here, the spherical distance $d_S(p,q)$ of two points $p,q \in \Sph^d$ is the angle between the rays starting at the origin $o$ and passing through $p,q$, that is, $d_S(p,q)= \arccos \langle p,q \rangle$, where $\langle \cdot, \cdot \rangle$ is the standard inner product of $p,q$ in $\Re^{d+1}$. This space is a $d$-dimensional Riemannian manifold. The volume induced by this structure is called \emph{spherical volume}, which we denote as $\vol_d^S(\cdot)$; this concept coincides with $d$-dimensional Hausdorff measure. Geodesic lines in this space are greatcircles in $\Sph^d$. For any $p,q \in \Sph^d$ with $q \neq -p$, the shorter greatcircle arc of $\Sph^d$ connecting them is called the \emph{(spherical) segment} connecting $p,q$. A set $X \subseteq \Sph^d$, contained in an open hemisphere of $\Sph^d$, is called \emph{convex} if for any $p,q \in X$ it contains the segment connecting them. Using this definition we may define (spherical) convex bodies following the Euclidean definition.

The `dual' model, for hyperbolic geometry, is the so-called \emph{hyperboloid model}. In this model, the hyperbolic space $\HHH^d$ is one sheet of a two-sheeted hyperboloid in the $(d+1)$-dimensional Lorenz space $\mathbb{L}^{d+1}$, equipped with the Lorenz inner product $\langle p,q \rangle_L = x_1y_1+ x_2y_2+ \ldots x_dy_d-x_{d+1}y_{d+1}$ for $p=(x_1,x_2,\ldots,x_{d+1}), q=(y_1,y_2,\ldots,y_{d+1}) \in \mathbb{L}^{d+1}$, i.e. with this notation $\HHH^d$ is the set of points $\{ p \in \mathbb{L}^{d+1} : \langle p,p \rangle_L = -1, x_{d+1} > 0 \}$. The hyperbolic distance $d_H(p,q)$ of the points $p,q \in \HHH^d$ is defined as the quantity 
$d_H(p,q)= \arcosh (-\langle p,q \rangle_L)$. The hyperbolic space $\HHH^d$ in this model is a Riemannian submanifold of $\mathbb{L}^{d+1}$. The volume induced by this structure is called \emph{hyperbolic volume}, and we denote it by $\vol_d^H(\cdot)$. Geodesic lines are intersections of the model with planes containing the origin $o$ of $\mathbb{L}^{d+1}$. For any $p,q \in \HHH^d$, the bounded, connected geodesic arc connecting them is called the \emph{(hyperbolic) segment} with endpoints $p,q$. A set $X \subseteq \HHH^d$ is called \emph{convex} if for any $p,q \in X$, $X$ contains the segment connecting them. Using this definition we may define convex bodies as in Euclidean or spherical spaces. We note that other, equivalent models of the $d$-dimensional hyperbolic space are also often used in the literature. One of them, the so-called \emph{projective ball model} is used also in this paper. We describe the properties of this model at the appropriate place in the proofs.

We finish this part with a brief introduction to normed spaces. Consider a $d$-dimensional real vector space $V$. Let $f : V \to \Re$ be a nonnegative function which is
zero only at zero vector $o \in V$, positively $1$-homogeneous, and satisfies the triangle inequality. Then $f$ is called a \emph{norm} and the pair $(V,f)$ a \emph{normed space}; these spaces are commonly used outside mathematics, see e.g. \cite{Moretti}. In the literature the space $V$ is often identified with the Euclidean space $\Re^d$; we follow this approach in this paper. It is well known that the unit ball $M = \{ p \in \Re^d : f(p)\leq 1 \}$ of the space is an $o$-symmetric convex body, and for any $o$-symmetric convex body $M$, the function $f(v) = \inf \{ \lambda > 0 : v \in \lambda M \}$ is a norm with unit ball $M$. This relation induces a bijection between $o$-symmetric, $d$-dimensional convex bodies and $d$-dimensional norms. In the paper, for any such body $M$ we denote the induced norm by $|| \cdot ||_M$, and the corresponding space by $\MM^d$. 
The norm $||\cdot||_M$ is called \emph{strictly convex} if its unit ball $M$ is strictly convex, i.e. if $\bd(M)$ does not contain nondegenerate segments. The norm $||\cdot||_M$ is \emph{smooth} if $M$ is smooth, or in other words, for any point $p \in \bd(M)$ there is a unique supporting hyperplane of $M$ at $p$. This property is equivalent to the property that $\bd(M)$, as a hypersurface in $\Re^d$, is continuously differentiable \cite{Schneider}. Finally, for any $p,q \in \MM^d$, the \emph{(normed) distance} of $p,q$ is the quantity $d_M(p,q)=||q-p||_M$.

Throughout the paper, by a \emph{(closed) ball} (if $d=2$, \emph{disk}) of radius $r$ and center $p$ we denote the set of points of the space at distance at most $r$ from $p$.

For more information on spherical, hyperbolic or normed spaces, the reader is referred to the books \cite{MM, ratcliffe, thompson}, respectively.

\subsection{Centroids in non-Euclidean spaces}\label{subsec:centroids}

Consider a finite system of points $p_1, p_2, \ldots, p_k \in \Re^d$ with assigned masses $m_1, m_2, \ldots, m_k$, respectively. We call the point $p$ with mass $m$, defined by
\[
m = \sum_{i=1}^k m_i, \quad p= \frac{1}{m} \sum_{i=1}^k m_i p_i.
\]
the \emph{weighted centroid} of this system.
This definition satisfies the following properties:
\begin{itemize}
\item[(i)] If we decompose the system into subsystems, and each subsystem is replaced by its weighted centroid, then the centroid of the system does not change.
\item[(ii)] The weighted centroid of the image of a system under any isometry of the space coincides with the image of the weighted centroid of the original system under the same isometry. 
\end{itemize}
Here, we call the point $p$ the \emph{centroid} of the point system.
This definition satisfies the so-called `Euclidean rule of the lever': for two points $p_1,p_2$ with masses $m_1, m_2$, respectively, their weighted centroid is the pair $(p,m)$ with $p \in [p_1,p_2]$ satisfying $||p_1-p|| m_1 = ||p_2-p|| m_2$ and $m=m_1+m_2$.
We note that this rule, combined with the property in (i) makes it possible to define the centroid of a weighted point system by induction on the number of points.
This approach can be generalized for any Borel set with positive measure, in particular, for convex bodies with uniform density, in a straightforward way.

\begin{definition}\label{defn:centroidRed}
Let $K \subset \Re^d$ be a convex body.
Then the \emph{centroid} of $K$ is
\begin{equation}\label{eq:centroidRed}
c(K) = \frac{1}{\vol_d^E(K)} \int_{x \in K} x \, d \, \vol_d^E(x).
\end{equation}
\end{definition}

\begin{remark}\label{rem:leverruleRed}
We note that for any convex body $K \subset \Re^d$, its centroid $c(K)$ is the unique point satisfying the following:
For any oriented hyperplane $H$, we have
\begin{equation}\label{eq:1stmomentRed}
\int_{x \in K} d_E^s(H,x) d \, \lambda_d(x) = 0 \Longleftrightarrow c(K) \in H,
\end{equation}
where $d_E^s(H,x)$ denotes the signed Euclidean distance of $H$ and $x$; the sign is determined by the orientation of $H$. This formula is a consequence of the Euclidean rule of the lever for convex bodies, and the integral in (\ref{eq:1stmomentRed}), denoted by $M_H(K)$, is called the \emph{first moment} of $K$ with respect to $H$.
\end{remark}

The Euclidean definition of weighted centroid as well as centroid make use of the linear structure of the Euclidean space $\Re^d$, and thus, it cannot be applied
in hyperbolic and spherical space. A thorough investigation of possible definitions of weighted centroids of weighted point systems in hyperbolic or spherical sapce was carried out by Gal'perin in \cite{Galperin} in 1993. A simple approach to define the centroid of a spherical set was used e.g. in \cite{Brock, Bjerre, Fog, BHPS}. Here we use a definition from \cite{Galperin} for a finite system of weighted points in an arbitrary $d$-dimensional hypersurface $S$ embedded in $\Re^{d+1}$.

Consider points $p_1, p_2, \ldots, p_k \in S$ with weights $m_1, m_2, \ldots, m_k$, respectively. Assume that $\sum_{i=1}^k m_i p_i \neq o$, and the ray starting at $o$ and passing through this point intersects $M$ in a unique point $p$. Then $p$ is called the \emph{centroid} of the system, and the pair $(p,m)$, where $m$ is defined by the equality $m p = \sum_{i=1}^k m_i p_i$, is called the \emph{weighted centroid} of the system.
We remark that, imagining the $d$-dimensional Euclidean space as the hyperplane $\{ x_{d+1}= 1 \}$ in $\Re^{d+1}$, this definition of the weighted centroid of a weighted point system coincides with the one presented in the beginning of Subsection~\ref{subsec:centroids}.
Furthermore, for the standard model of the $d$-dimensional spherical space $\Sph^d$ as the unit sphere of the space $\Re^{d+1}$ centered at $o$, the above definition of centroid coincides with the classical one in \cite{Brock, Bjerre, Fog, BHPS}.
Based on this, we define the centroid of a convex body in $\Sph^d$ as follows.

\begin{definition}\label{defn:centroidSphd}
Let $K$ be a convex body in the spherical space $\Sph^d$, embedded in $\Re^{d+1}$ as the unit sphere centered at the origin $o$. 
Then the \emph{centroid} of $K$ is
\begin{equation}\label{eq:centroidSphd}
c(K) = \frac{1}{|\int_{x \in K} x \, d \vol_d^S(x)|} \int_{x \in K} x \, d \vol_d^S(x).
\end{equation}
\end{definition}

Regarding Definition~\ref{defn:centroidSphd}, the fact that every spherical convex body in $\Sph^d$ is contained in an open hemisphere of $\Sph^d$ guarantees that $\int_{x \in K} x \, d \vol_d^S(x) \neq o$. The following spherical version of Remark~\ref{rem:leverruleRed} follows from \cite{Galperin} for point systems (see also \cite{BHLL} for the version for convex bodies).

\begin{remark}\label{rem:leverruleSphd}
For any convex body $K \subset \Sph^d$, its centroid $c(K)$ is the unique point satisfying the following:
For any oriented hyperplane ($(d-1)$-dimensional greatsphere) $H$, we have
\begin{equation}\label{eq:1stmomentSphd}
\int_{x \in K} \sin (d_S^s(H,x)) \, d \vol_d^S(x) = 0 \Longleftrightarrow c(K) \in H,
\end{equation}
where $d_S^s(H,x)$ denotes the signed spherical distance of $H$ and $x$; the sign is determined by the orientation of $H$. Here the integral in (\ref{eq:1stmomentSphd}), denoted by $M_H(K)$, is called the \emph{first moment} of $K$ with respect to $H$.
\end{remark}

One may apply the above approach for the hyperbolic space $\HHH^d$ using the hyperboloid model, which was introduced in the beginning of this section. This leads to the following definition.

\begin{definition}\label{defn:centroidHHd}
Let $K$ be a convex body in the hyperbolic space $\HHH^d$, embedded in $\Re^{d+1}$ as the sheet of hyperboloid with equation $x_1^2+x_2^2 \ldots + x_d^2 - x_{d+1}^2 = 1$, $x_{d+1} > 0$. Then the \emph{centroid} of $K$ is
\begin{equation}\label{eq:centroidHHd}
c(K) = \frac{1}{|\int_{x \in K} x \, d \vol_d^H(x)|} \int_{x \in K} x \, d \vol_d^H(x).
\end{equation}
\end{definition}

This definition yields the following hyperbolic variant of Remark~\ref{rem:leverruleRed}.

\begin{remark}\label{rem:leverruleHHHd}
For any convex body $K \subset \HHH^d$, its centroid $c(K)$ is the unique point satisfying the following:
For any oriented hyperplane $H$, we have
\begin{equation}\label{eq:1stmomentHHHd}
\int_{x \in K} \sinh (d_H^s(H,x)) \, d \vol_d^H(x) = 0 \Longleftrightarrow c(K) \in H,
\end{equation}
where $d_H^s(H,x)$ denotes the signed hyperbolic distance of $H$ and $x$; the sign is determined by the orientation of $H$. Here the integral in (\ref{eq:1stmomentHHHd}), denoted by $M_H(K)$, is called the \emph{first moment} of $K$ with respect to $H$.
\end{remark}

We note that the same definition of hyperbolic centroid can also obtained from laws of special relativity theory, as was shown by Gal'perin \cite{Galperin}. In addition, using variants of the properties (i) and (ii) in the beginning of this subsection, an axiomatic definition can also be given to define the weighted centroid of a finite system of weighted points in any space of constant curvature. This was also done by Gal'perin in \cite{Galperin}, where the author showed that the above definition is the only one satisfying these axioms. In particular, this shows that the above approach leads to model independent definitions of centroids of convex bodies in these spaces. For more information on centroids in spaces of constant curvature, the interested reader is referred to \cite{Galperin}.

We finish this subsection with defining centroids in a $d$-dimensional normed space $\MM^d$. To do it, we observe that
\begin{itemize}
\item[(i)] The space $\MM^d$ is a $d$-dimensional linear vector space; i.e. it has a linear structure.
\item[(ii)] By a classical theorem of Haar, every bounded, translation invariant Borel measure on $\Re^d$ is a scalar multiple of $d$-dimensional Lebesgue measure. In other words, every `meaningful' definition of volume in $\MM^d$ is a scalar multiple of $d$-dimensional Euclidean volume (see e.g. \cite{AT, BL, Langi16}).
\end{itemize}

\begin{remark}\label{rem:normedcentroid}
Based on the above observations, for any convex body $K$ in the $d$-dimensional normed space $\MM^d$, the \emph{centroid} $c(K)$ of $K$ is defined as
\begin{equation}\label{eq:centroidMMd}
c(K) = \frac{1}{m} \int_{x \in K} x \, d \, \lambda_d(x),
\end{equation}
where $m=\lambda_d(K)$. This notion satisfies the following normed variant of Remark~\ref{rem:leverruleRed}:\\
For any oriented hyperplane $H$, we have
\begin{equation}\label{eq:1stmomentnormed}
\int_{x \in K} d_{M}^s(H,x) d \, \lambda_d(x) = 0 \Longleftrightarrow c(K) \in H,
\end{equation}
where $d_M^s(H,x)$ denotes the signed normed distance of $H$ and $x$ in $\MM^d$; ; the sign is determined by the orientation of $H$. Here the integral in (\ref{eq:1stmomentnormed}), denoted by $M_H(K)$, is called the \emph{first moment} of $K$ with respect to $H$.
\end{remark}

We note that for any convex body $K$ in any of the spaces above, we have $c(K) \in \inter(K)$.

\subsection{Equilibria in non-Euclidean spaces}\label{subsec:equilibria}

Consider a convex body $K$ in the Euclidean space $\Re^d$. If the point $q \in \bd(K)$ has the property that the hyperplane through $q$ and orthogonal to $q-c(K)$ supports $K$, then we say that $q$ is an \emph{equilibrium point} of $K$. It is well known (see e.g. \cite{DLS, DLV, Langi22}) that if $K$ is smooth, then the equilibrium points of $K$ coincide with the critical points of the Euclidean distance function $x \mapsto d_E(x,c(K))$, measured from $c(K)$ and restricted to $\bd (K)$.
We remark that a convex body $K \subset \Re^3$ is called \emph{smooth} if for any boundary point $x$ of $K$ there is a unique supporting hyperplane of $K$ at $q$; this property coincides with the property that $\bd(K)$ is  a $C^1$-class submanifold of $\Re^d$ (cf. \cite{Schneider}).

We define nondegenerate equilibrium points only for smooth convex bodies. If $K$ is smooth, $q \in \bd(K)$ is an equilibrium point of $K$ with a $C^2$-class neighborhood in $\bd(K)$, and the Hessian of the Euclidean distance function on $\bd(K)$, measured from $c(K)$, has nonzero determinant, we say that $q$ is \emph{nondegenerate}. If $K$ has $C^2$-class boundary, finitely many equilibrium points and all are nondegenerate, we say that $K$ is nondegenerate.
In dimensions $d=2$ and $d=3$, nondegenerate equilibrium points are classified based on the number of negative eigenvalues of the Hessian. More specifically, if $d=2$, and the Hessian of the function $x \mapsto d(x,c(K))$ at the equilibrium point $q$ has $0,1$ negative eigenvalue, then $q$ is called \emph{stable} and \emph{unstable}, respectively. For $d=3$, if the Hessian at $q$ has $0,1,2$ negative eigenvalues, then $q$ is called \emph{stable, saddle-type} and \emph{unstable}, respectively.
Based on the observation that the distance function is a Morse function on $\bd(K)$, and using the fact that if $d=2$ or $d=3$, then the Euler characteristic of $\bd(K)$ coincides with that of $\Sph^1$ and $\Sph^2$, respectively, the following important consequence of the Poincar\'e-Hopf theorem \cite{Hopf} holds.

\begin{corollary}[Poincar\'e-Hopf Theorem]\label{cor:PH}
Let $K$ be a nondegenerate convex body in $\Re^d$ with $d=2$ or $d=3$.
\begin{itemize}
\item[(1)] If $d=2$ and $K$ has $S$ stable and $U$ unstable points, then $S-U=0$.
\item[(2)] If $d=3$ and $K$ has $S$ stable, $H$ saddle-type and $U$ unstable points, then $S-H+U=2$.
\end{itemize}
\end{corollary}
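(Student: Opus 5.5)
The plan is to reduce the statement to the classical Poincar\'e--Hopf theorem for gradient-like vector fields on a closed manifold. By hypothesis, $K$ has $C^2$-class boundary, so $\bd(K)$ is a compact $C^2$ hypersurface of $\Re^d$. Since $K$ is convex with nonempty interior, radial projection from the interior point $c(K)$ onto the unit sphere centred at $c(K)$ is a homeomorphism. Indeed, every ray from an interior point meets $\bd(K)$ exactly once. Because $\bd(K)$ is $C^1$ and this ray is never tangent to it, the projection is in fact a diffeomorphism $\bd(K)\to \Sph^{d-1}$. Hence $\chi(\bd(K))=\chi(\Sph^1)=0$ when $d=2$, and $\chi(\bd(K))=\chi(\Sph^2)=2$ when $d=3$.

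Next I consider the function $f(x)=d_E(x,c(K))$ restricted to $\bd(K)$. It is $C^2$ there, since $c(K)\in\inter(K)$ means $f$ never vanishes on $\bd(K)$. As recalled in the text, its critical points are exactly the equilibrium points of $K$. The reason is that $\nabla f$ at $x$ is the tangential component of $(x-c(K))/|x-c(K)|$, so it vanishes precisely when $x-c(K)$ is normal to $\bd(K)$. For a smooth convex body this is equivalent to the hyperplane through $x$ orthogonal to $x-c(K)$ being the supporting hyperplane at $x$. By the nondegeneracy assumption there are finitely many critical points, each with a Hessian of nonzero determinant, so $f$ is a Morse function on $\bd(K)$.

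Then I apply the Poincar\'e--Hopf theorem to the gradient vector field $\nabla f$ on $\bd(K)$. At a nondegenerate zero $q$, the index equals $\operatorname{sign}\det \mathrm{Hess} f(q)=(-1)^{k(q)}$, where $k(q)$ is the number of negative eigenvalues. Summing these indices gives $\chi(\bd(K))$. For $d=2$, stable points have $k=0$ and index $+1$, while unstable points have $k=1$ and index $-1$, which gives $S-U=0$. For $d=3$, the indices are $+1$, $-1$ and $+1$ for stable, saddle-type and unstable points respectively, which gives $S-H+U=2$.

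The only step needing care is the topological identification of $\bd(K)$ with a sphere at the level of smoothness the index theorem requires. I would handle it with the radial-projection diffeomorphism above. The key fact there is that the derivative of the projection is injective because the radial direction is transversal to the tangent space. That transversality holds because the supporting hyperplane at any boundary point does not pass through the interior point $c(K)$. Everything else is the standard Morse-theoretic form of Poincar\'e--Hopf.
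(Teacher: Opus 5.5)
Your proposal is correct and follows the same route as the paper. The paper justifies the corollary in one sentence: the distance function from $c(K)$ is a Morse function on $\bd(K)$, $\bd(K)$ has the Euler characteristic of $\Sph^{d-1}$, and Poincar\'e--Hopf then yields the identities. You supply the details the paper leaves implicit, namely the radial-projection diffeomorphism and the fact that a nondegenerate critical point with $k(q)$ negative Hessian eigenvalues has index $(-1)^{k(q)}$.
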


By Corollary~\ref{cor:PH}, to determine the numbers of the different types of equilibrium points of a nondegenerate convex body in $\Re^2$, it is sufficient to know the number of its stable points, and in $\Re^3$ the numbers of its stable and unstable points. Thus, every nondegenerate convex body in $\Re^2$ belongs to a class $\{S\}_E$ consisting of the bodies having $S$ stable and $S$ unstable points, and in $\Re^3$ to a class $\{S,U \}_E$ consisting of the bodies having $S$ stable, $U$ unstable and $S+U-2$ saddle-type points. We remark that the global minimizers and maximizers of the Euclidean distance function are stable and unstable points of the body $K$, and thus, the quantities $S,U$ above are positive integers.

For any convex body in spherical or hyperbolic space, the above concepts of equilibria, nondegenerate equilibria, stable, unstable and saddle points can be naturally modified by using the spherical or hyperbolic distance functions, respectively, and the same holds for the consequence of the Poincar\'e-Hopf theorem in Corollary~\ref{cor:PH}.

For a convex body $K$ in a normed space we follow the same approach. More specifically, if $K$ is a convex body in the normed space $\MM^d$ and $q \in \bd(K)$, we say that $q$ is an equilibrium point of $K$ if there is a hyperplane $H$ supporting $K$ at $q$ that is Birkhoff orthogonal to the segment $[c(K),q]$. We recall that this is equivalent to the property that a homothetic copy of the unit ball $M$ of $\MM^d$, centered at $c(K)$, touches $K$. Thus, if both $K$ and $M$ are smooth, then the equilibrium points of $K$ coincide with the critical points of the normed distance function $x \mapsto d_M(c(K),x)$ restricted to $\bd(K)$.
In the paper we discuss the equilibrium points of convex bodies in planes with strictly convex, smooth norms: in this case for any hyperplane $H$ and point $x \notin H$ there is a unique segment $[x,q]$ with $q \in H$ to which it is Birkoff orthogonal, and for every line $[x,q]$ there is a unique hyperplane through $q$  
that is Birkhoff orthogonal to it.

To define nondegenerate equilibrium points of a convex body $K$ we assume that $M$ as well as $K$ has $C^2$-class boundary, and follow the Euclidean definition. With this additional assumption, we define nondegenerate convex bodies, stable, unstable and saddle-type equilibria as in the Euclidean case. We note that under this condition the Poincar\'e-Hopf theorem can be applied in the same way, showing that Corollary~\ref{cor:PH} holds for any nondegenerate convex body in a normed space whose unit ball has $C^2$-class boundary.

\section{The proof of Theorem~\ref{thm:2d}}\label{sec:2d}

Let $\V^2$ be a plane of constant curvature, or a normed plane with a smooth and strictly convex unit disk. For any two points $p,q \in \V^2$, we denote by $d_V(p,q)$ the distance of $p,q$ in $\V^2$. Let $K$ be a plane convex body in $\V^2$. We suppose for contradiction that $K$ has at most three equilibrium points.

Let $B$ denote a disk in $\V^2$ centered at the centroid $c(K)$ of $K$. By \cite[Lemma 4.4]{BHLL} and since every plane convex body can be approximated arbitrarily well by a plane convex body with piecewise smooth boundary, it follows that if $\V^2 = \Sph^2$, then $K$ is contained in the closed hemisphere centered at $c(K)$.
Setting $S=\bd(B)$, we define the radial function $\rho_K : S \to \Re$ of $K$ as follows: For any $x \in S$, let $L_x$ denote the halfline starting at $c(K)$ and containing $x$, and let $q_x$ denote the intersection point of $L_x$ and $\bd(K)$. Then
\[
\rho_K(x) = d_V(q_x,c(K)).
\]
Observe that the function $\rho_K$ is continuous on $S$.

\begin{lemma}\label{lem:monotonicity}
There are points $x_1,x_2 \in S$ such that the function $\rho_K$ is strictly monotone on both arcs of $S$ with endpoints $x_1, x_2$.
\end{lemma}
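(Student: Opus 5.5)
The plan is to show that every weak local extremum of $\rho_K$ gives an equilibrium point of $K$. The assumption that $K$ has at most three equilibrium points then leaves $\rho_K$ with too few extrema to be anything but monotone on two complementary arcs.

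\emph{Step 1: weak local extrema are equilibria.} Call $x \in S$ a weak local maximum (minimum) of $\rho_K$ if $\rho_K(y) \le \rho_K(x)$ (resp. $\ge$) for all $y$ near $x$. Set $r=\rho_K(x)$ and let $B_r$ be the disk of radius $r$ centered at $c(K)$. In all the spaces considered, $B_r$ is strictly convex and smooth, and its boundary circle is crossed exactly once by each halfline from $c(K)$. Note that $c(K) \in \inter(K)$, and in the spherical case $K$ lies in the closed hemisphere centered at $c(K)$, so $q_x$ is well defined.

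\begin{itemize}
\item If $x$ is a weak local maximum, then near $q_x$ the body $K$ lies inside $B_r$. Hence the unique tangent line $\ell$ of $B_r$ at $q_x$ locally supports $K$. By convexity of $K$, $\ell$ supports $K$ globally.
\item If $x$ is a weak local minimum, then near $q_x$ the disk $B_r$ lies inside $K$. Hence any supporting line of $K$ at $q_x$ locally supports $B_r$ at $q_x$. Since $B_r$ is convex and smooth, this line must be the tangent $\ell$.
\end{itemize}

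In both cases the line $\ell$ supporting $K$ at $q_x$ is tangent to the disk centered at $c(K)$ through $q_x$. In the constant curvature case this means $\ell$ is perpendicular to the segment $[c(K),q_x]$; in the normed case it means $\ell$ is Birkhoff orthogonal to it. Either way, $q_x$ is an equilibrium point. Distinct $x$ give distinct $q_x$.

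\emph{Step 2: counting.} By Step 1, $\rho_K$ has at most three weak local extrema. In particular $\rho_K$ is not constant on any nondegenerate arc, since every point of such an arc would be a weak extremum. Let $x_1$ be a global maximum point and $x_2$ a global minimum point of the continuous function $\rho_K$. Then $x_1 \ne x_2$, since otherwise $\rho_K$ would be constant.

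The key elementary fact is this: on a closed arc $A$ of $S$, if $\rho_K$ has no weak local extremum in the interior of $A$, then $\rho_K$ is injective on $A$, hence strictly monotone there. To see this, suppose $\rho_K(a)=\rho_K(b)$ for some $a,b \in A$ with $a$ before $b$. Since $\rho_K$ is not constant on the subarc $[a,b]$, its maximum or its minimum on $[a,b]$ is attained at an interior point. That point is a weak local extremum of $\rho_K$ on $S$, a contradiction. A continuous injective function on an arc is strictly monotone.

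\emph{Step 3: excluding a third extremum.} Besides $x_1,x_2$ there is at most one further weak local extremum $z$. The arc of $S$ between $x_1$ and $x_2$ not containing $z$ is strictly monotone by Step 2. Suppose $z$ lies in the interior of the other arc $A$. Then $\rho_K$ is strictly monotone on the subarc from $x_1$ to $z$ and on the subarc from $z$ to $x_2$.
\begin{itemize}
\item If $z$ is a weak local maximum, then $\rho_K$ increases towards $z$ along the subarc from $x_1$. This gives $\rho_K(z)>\rho_K(x_1)$, contradicting the global maximality of $x_1$.
\item If $z$ is a weak local minimum, then $\rho_K$ decreases towards $z$ along the subarc from $x_2$. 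This gives $\rho_K(z)<\rho_K(x_2)$, contradicting the global minimality of $x_2$.
\end{itemize}
Hence both arcs are strictly monotone, which proves the lemma.

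The main obstacle is Step 1. It must be done carefully without smoothness of $K$, using only the strict convexity and smoothness of the disks in each geometry. These properties hold for the disks of constant curvature planes, which are smooth and strictly convex, and for the normed plane by assumption.
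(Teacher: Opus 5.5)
Your proof is correct and follows the paper's approach: weak local extrema of $\rho_K$ give distinct equilibrium points because the disks centered at $c(K)$ are smooth, and the assumption of at most three equilibria then forces strict monotonicity between a global maximum and a global minimum. Your Steps 2--3 (injectivity on arcs with no interior extremum, then ruling out a third extremum using global maximality and minimality) replace the paper's case analysis with the points $y_1,y_2$ by a cleaner argument. You also correctly exclude the constant case via the equilibrium count, whereas the paper says that for a disk ``the statement clearly holds'', which is not literally true since a constant $\rho_K$ is not strictly monotone.
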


\begin{proof}
Let $x_1$ and $x_2$ be an absolute minimum and an absolute maximum point of $\rho_K$, respectively. If $\rho_K(x_1)=\rho_K(x_2)$, then $K$ is a disk, and the statement clearly holds. Hence, assume that $r_1=\rho_K(x_1) < \rho_K(x_2)=r_2$.

Assume that $x$ is a local minimum point of $\rho_K$, and let $D$ be the disk centered at $c(K)$ and satisfying $x \in \bd(D)$. Then, within a neighborhood $V$ of the corresponding point $q_x$ of $\bd(K)$, $K$ contains the set $V \cap D$. Since $D$ is smooth and $K$ is convex, the unique supporting line of $D$ at $q_x$ supports $K$ as well. Thus, $q_x$ is an equilibrium point of $K$. It follows similarly that for every local maximum point $x$ of $\rho_K$, the corresponding point $q_x$ of $\bd(K)$  is an equilibrium point. Hence, it follows that $\rho_K$ is not constant on any nondegenerate arc of $S$, as otherwise $K$ has infinitely many equilibrium points.

Now, consider one of the arcs $S'$ of $S$ with endpoints $x_1, x_2$. If $\rho_K$ is not strictly monotone on $S'$, then there are some points $y_1, y_2 \in S' \setminus \{ x_1, x_2 \}$ such that $x_1, y_1, y_2, x_2$ are in this order on $S'$, and $r_1 \leq \bar{r}_1 = \rho_K(y_1) \geq \rho_K(y_2) \leq \bar{r}_2 < R$. Let $S_1, S_2, S_3$ be the closed arcs of $S'$ with endpoints $x_1$ and $y_1$, $y_1$ and $y_2$, and $y_2$ and $x_2$, respectively.

If $r_1 = \bar{r}_1$, then for any absolute maximum point $z$ of $\rho_K$ on the arc $S_1$, the points $x_1, z, y_1, x_2$ are local extremum points, contradicting our assumption. We obtain similarly that $r_2 \neq \bar{r}_2$.

Consider the case that $\bar{r}_1 = \bar{r}_2$. Let $z_1$ and $z_2$ be absolute minimum and maximum points of $\rho_K$ on the arc $S_2$. If both lie in the interior of $S_2$, then $x_1, x_2, z_1, z_2$ are local extremum points of $\rho_K$; a contradiction. If $\rho_K(z_1)=\bar{r}_1=\bar{r}_2$ then, letting $z_3$ be an absolute minimum point of $\rho_K$ on $S_3$, the points $x_1, y_1, z_3, x_2$ are local extremum points of $\rho_K$; a contradiction. The case that $\rho_K(z_2)=\bar{r}_1=\bar{r}_2$ can be excluded similarly.

We are left with the case $r_1 < \bar{r}_2 < \bar{r}_1 < r_2$. In that case a case analysis similar to the one in the previous paragraph yields the assertion.
\end{proof}

In the following, we call a pair of points $q_1, q_2 \in S$ \emph{antipodal} if the reflection of $q_1$ to $c(K)$ is $q_2$. Since $\rho_K$ is a continuous function, we may apply the Borsuk-Ulam theorem, and obtain the existence of two antipodal points $y_1,y_2 \in S$ such that $\rho_K(y_1)= \rho_K(y_2)$.  Let $S_1'$ and $S_2'$ be the two open arcs of $S$ connecting $y_1$ and $y_2$ such that one of them, say $S_1'$ contains $x_1$. Then, by Lemma~\ref{lem:monotonicity}, for any pair of antipodal points $q_1 \in S_1'$ and $q_2 \in S_2'$, $\rho_K(q_1) < \rho_K(q_2)$. Thus, if $L$ denotes the line containing the collinear points $c(K), y_1, y_2$, and $h : \V^2 \to \V^2$ is the reflection with respect to $c(K)$, then $h(K) \setminus K$ and $K \setminus h(K)$ are strictly separated by $L$. Thus, the first moment $M_L(K)$ of $K$ with respect to $L$ is not zero. This, by Remarks~\ref{rem:leverruleRed}, \ref{rem:leverruleSphd}, \ref{rem:leverruleHHHd}, and \ref{rem:normedcentroid}, contradicts the fact that $L$ passes through $c(K)$.

\section{The proof of Theorem~\ref{thm:3d}}\label{sec:3d}

First we present the proof for the spherical space $\Sph^3$ in Subsection~\ref{subsec:spherical}. Next, in Subsections~\ref{subsec:hyperbolic} and \ref{subsec:normed} we explain how to modify the proof for hyperbolic and normed spaces, respectively.

\subsection{The case $\V^3 = \Sph^3$}\label{subsec:spherical}

Consider the unit sphere $\Sph^3$ of $\Re^4$ centered at the origin $o$. Set the point $v_N = (0,0,0,1)$; we call this point the \emph{North Pole} of $\Sph^3$, and the set of points of $\Sph^3$ with positive $x_4$-coordinates the \emph{Northern Hemisphere} $N^3$ of $\Sph^3$. Let $H_N$ be the hyperplane of $\Re^4$ with equation $\{ x_4 = 1 \}$. The \emph{gnomonic projection} of $\Sph^3$ onto $H_N$ is defined as the map $g_N : N^3 \to H_N$, where the image $g_N(x)$ of $x \in N^3$ is the intersection of $H_N$ and the halfline $\{ \lambda x : \lambda > 0 \}$. Note that $g_N$ is a bijection between $N^3$ and $H_N$ with the property that $X \subset N^3$ is a greatcircle arc in $N^3$ iff its image $g_N(X)$ is a Euclidean segment in $H_N$. Thus, for any spherically convex body $K \subset N^3$, $g_N(K)$ is a convex body in $H_N$, regarded as a $3$-dimensional Euclidean space, and vice versa: if $L$ is a (Euclidean) convex body in $H_N$, then $g^{-1}(L)$ is a (spherical) convex body in $N^3 \subset \Sph^3$.

\begin{lemma}\label{lem:sphericalvolume}
For any point $y \in H_N$, set $|y|_N = |y-v_N|$. Then the following holds.
\begin{enumerate}
\item[(i)] For any point $x \in N^3$, its spherical distance from $v_N \in \Sph^3$ is
\[
d_S(v_N,x)= \arctan |g_N(x)|_N.
\]
\item[(ii)] For any Borel set $X \subset N^3$,
\[
\vol_3^S(X) = \int_{x \in g_N(X)} \frac{1}{\left( 1+ |x|_N^2 \right)^2} \, d \lambda_3(x).
\]
\end{enumerate}
\end{lemma}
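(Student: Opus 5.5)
Both parts are direct computations in coordinates adapted to the gnomonic projection. Write $H_N = \{ (y,1) : y \in \Re^3 \}$, so that $|g_N(x)|_N = |y|$ when $g_N(x) = (y,1)$. For $x \in N^3$ we have $g_N(x) = x/x_4$. Conversely, a point $(y,1) \in H_N$ has preimage $g_N^{-1}(y,1) = (y,1)/\sqrt{1+|y|^2}$.

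For (i), the spherical distance is $d_S(v_N,x) = \arccos \langle v_N, x\rangle = \arccos x_4$. Put $x = (y,1)/\sqrt{1+|y|^2}$. Then $x_4 = 1/\sqrt{1+|y|^2}$, so the angle $\theta = d_S(v_N,x) \in [0,\pi/2)$ satisfies $\cos\theta = 1/\sqrt{1+|y|^2}$. This gives $\tan\theta = |y| = |g_N(x)|_N$. Geometrically, $o$, $v_N$ and $g_N(x)$ form a right triangle with the right angle at $v_N$ and unit leg $|v_N - o|$, and $\theta$ is its angle at $o$.

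For (ii), the plan is to compute the Jacobian of the map $\Phi : \Re^3 \to \Sph^3$, $\Phi(y) = (y,1)/\sqrt{1+|y|^2}$. The volume density is $\sqrt{\det (D\Phi^T D\Phi)}$, and its cleanest evaluation uses rotational symmetry about $v_N$. Use spherical coordinates $y = r u$ with $u \in \Sph^2$ and $r = \tan\theta$. On $\Sph^3$ the point is $(\sin\theta\, u, \cos\theta)$, whose spherical volume element is $\sin^2\theta \, d\theta \, d\sigma(u)$. The Euclidean volume element on $H_N$ is $r^2 \, dr \, d\sigma(u)$.

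Substituting $r = \tan\theta$ gives $dr = \sec^2\theta \, d\theta$, so
\[
r^2\,dr = \tan^2\theta \sec^2\theta \, d\theta = \frac{\sin^2\theta}{\cos^4\theta}\, d\theta .
\]
Hence $\sin^2\theta\, d\theta = \cos^4\theta \cdot r^2 \, dr$. Since $\cos^4\theta = (1+r^2)^{-2}$, this is exactly the claimed density $(1+|x|_N^2)^{-2}$. The formula then holds for all Borel sets $X \subset N^3$ by the change of variables theorem, because $g_N$ is a diffeomorphism of $N^3$ onto $H_N$.

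The only point needing care is to justify the spherical-coordinate volume element on $\Sph^3$. This is standard, and it can be checked directly: the map $(\theta,u) \mapsto (\sin\theta\, u, \cos\theta)$ has orthogonal partial derivatives, of length $1$ in the $\theta$ direction and scaled by $\sin\theta$ in the $u$ directions. Alternatively, one can verify (ii) through the general identity for the central projection of a sphere onto a tangent plane. The Jacobian is the product of the radial scaling factor $1/\sqrt{1+r^2}$ cubed, coming from the distance to $o$, and the obliquity factor $\cos\theta = 1/\sqrt{1+r^2}$. Together these again give $(1+r^2)^{-2}$, which confirms the computation.
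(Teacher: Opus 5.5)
Your proof is correct and follows the route the paper indicates. The paper gives no details beyond saying that (i) is an elementary geometric observation and (ii) is elementary calculus, and you fill these in: for (i), the right triangle $o, v_N, g_N(x)$ with unit leg; for (ii), the substitution $r=\tan\theta$ in spherical coordinates, giving the density $\cos^4\theta=(1+r^2)^{-2}$, which your obliquity-times-radial-scaling cross-check confirms.
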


Here, (i) is based on an elementary geometric observation and (ii) can be proved by elementary calculus. For more information on the properties of gnonomic projection (as well as for a more general form of (ii)) the interested reader is referred to \cite{BW}. 

We also need the next lemma in our construction. In this, for any point $x \in H_N$, we denote the coordinates of $x$ by $x=(x_1,x_2,x_3,1)$.

\begin{lemma}\label{lem:momentcomp}
Let $H_{3}$ denote the hyperplane of $\Re^4$ with equation $\{x_3 = 0\}$, oriented towards the normal vector $u=(0,0,1,0)$. Let $K \subset N^3$ be a convex body. Then the first moment of $K$ with respect to the hyperplane (greatsphere) $G=H_{3} \cap \Sph^3$ is
\begin{equation}\label{eq:momentcomp}
M_G(K) = \int_{x \in g_N(K)} \frac{x_3}{\left( 1+ |x|_N^2 \right)^{5/2}} d \lambda_3(x).
\end{equation}
\end{lemma}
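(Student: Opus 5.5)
We prove the formula by a direct change of variables, using Remark~\ref{rem:leverruleSphd} to identify the integrand and Lemma~\ref{lem:sphericalvolume}(ii) for the volume element.

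First we identify the integrand in the definition of $M_G(K)$. The greatsphere $G$ is the intersection of $\Sph^3$ with the hyperplane $H_3$ through $o$, whose unit normal is $u$. For a point $z\in\Sph^3$, the signed spherical distance $d_S^s(G,z)$ is the angle between the vector $z$ and the hyperplane $H_3$, signed according to $u$. Hence
\[
\sin\bigl(d_S^s(G,z)\bigr)=\langle z,u\rangle = z_3 .
\]
Therefore
\[
M_G(K)=\int_{z\in K} z_3 \, d\vol_3^S(z).
\]

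Next we write this in the gnomonic chart. Let $z\in N^3$ and $x=g_N(z)=(x_1,x_2,x_3,1)$. Then $z=x/|x|$, where $|x|$ is the Euclidean norm in $\Re^4$, and
\[
|x|^2 = x_1^2+x_2^2+x_3^2+1 = 1+|x|_N^2 .
\]
Thus $z_3 = x_3/(1+|x|_N^2)^{1/2}$.

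Lemma~\ref{lem:sphericalvolume}(ii) says that the pushforward of spherical volume under $g_N$ has density $(1+|x|_N^2)^{-2}$ with respect to $\lambda_3$. Applying the change of variables $z=g_N^{-1}(x)$ to the measurable function $z\mapsto z_3$ gives
\[
M_G(K)=\int_{x\in g_N(K)} \frac{x_3}{(1+|x|_N^2)^{1/2}}\cdot\frac{1}{(1+|x|_N^2)^{2}}\, d\lambda_3(x),
\]
which is exactly \eqref{eq:momentcomp}.

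The only step needing care is passing from the statement of Lemma~\ref{lem:sphericalvolume}(ii), which is about volumes of sets, to integrals of functions. This follows by the standard argument: the identity holds for indicator functions, hence for simple functions by linearity, and then for the bounded continuous integrand $z\mapsto z_3$ by dominated convergence, since $K$ has finite volume.

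Alternatively, one can derive the density directly by computing the Jacobian of the map $x\mapsto x/|x|$ restricted to $H_N$. This gives the factor $(1+|x|_N^2)^{-(d+1)/2}$ with $d=3$, confirming (ii).

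The sign convention also needs checking: for points with $z_3>0$ the point lies on the side of $G$ towards $u$, so the sine is positive there, consistent with the orientation of $H_3$.
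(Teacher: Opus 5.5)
Your proof is correct and follows essentially the same route as the paper: you identify $\sin\bigl(d_S^s(G,z)\bigr)$ with $\langle z,u\rangle$, write $g_N^{-1}(x)=x/\sqrt{1+|x|_N^2}$ to get $z_3=x_3/\sqrt{1+|x|_N^2}$, and combine this with the volume density from Lemma~\ref{lem:sphericalvolume}(ii). Your extra remarks (extending (ii) from sets to integrands, the Jacobian check, and the sign convention) only make explicit steps that the paper leaves implicit.
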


\begin{proof}
Consider any point $q \in \Sph^3$, and let the angle of $q$ and $u=(0,0,1,0)$ be denoted by $\gamma$. Then the inner product of $q$ and $u$ is $\langle q, u \rangle = \cos \gamma$. On the other hand, we can observe that $d_3^S(q,G)=\frac{\pi}{2} - \gamma$, implying that $\sin d_3^S(q,G) = \cos \gamma = \langle q, u \rangle$.

Now, let $x=(x_1,x_2,x_3,1) \in H_N$. Then
\[
g_N^{-1}(x) = \left( \frac{x_1}{\sqrt{1+|x|_N^2}}, \frac{x_2}{\sqrt{1+|x|_N^2}}, \frac{x_3}{\sqrt{1+|x|_N^2}}, \frac{1}{\sqrt{1+|x|_N^2}} \right) .
\]
Thus, $\langle g_N^{-1}(x) , u \rangle = \frac{x_3}{\sqrt{1+|x|_N^2}}$. Now the assertion follows from the definition of first moment in Remark~\ref{rem:leverruleSphd}, and using the formula for the volume element appearing in (ii) of Lemma~\ref{lem:sphericalvolume}.
\end{proof}

Next, observe that for any spherical ball $B$ in $\Sph^3$ centered at $v_N$, $g_N(B)$ is a Euclidean ball in $H_N$ centered at $v_N$, and vice versa. Thus, in the remaining part of Subsection~\ref{subsec:spherical}, we construct a mono-monostatic convex body in $H_N$ arbitrarily close to a given Euclidean ball $B$ in $H_N$, centered at $v_N$ and of radius $R > 0$. We note that $K \subset N^3$ is mono-monostatic with centroid $c(K)=v_N$ iff $g_N(K)$ is mono-monostatic with respect to $v_N$ (in Euclidean sense). Thus, with a little abuse of notation, we identify $H_N$, equipped with the volume element $\frac{1}{\left( 1+ |x|_N^2 \right)^2} \, d \lambda_3(x)$, with the northern hemisphere $N^3$ of $\Sph^3$. In our computations we regard $v_N$ as the origin of the space, and omit the fourth coordinates of the points. 

In our construction we follow the one in \cite{DLV} for $\Re^3$ with necessary modifications. We set $S$ as the unit sphere $S=(v_N + \Sph^3) \cap H_N$ of $H_N$ centered at the `origin' $v_N$.

The construction proceeds as follow. In the first part, we define a two-parameter family $\mathcal{F}$ of star-shaped bodies $K(c,d)$ such that for any choice of the parameters
\begin{itemize}
\item[(A)] $K(c,d)$ has $C^2$-class boundary;
\item[(B)] $K(c,d)$ has exactly one stable and one unstable point with respect to the `origin' $v_N$ of $H_N$.
\end{itemize}    
In the second part, we show that for any $\varepsilon > 0$, there is a suitable choice of the parameters $c,d$ such that
\begin{itemize}
\item[(C)] $K(c,d)$ has positive Gaussian curvature at every boundary point, implying also that it is convex;
\item[(D)] the (spherical) centroid of $K_m(c,d)$ is $v_N$;
\item[(E)] $d_H(K(c,d), B) < \varepsilon$.
\end{itemize}

We start with the construction of the family $\mathcal{F}$.
                                              
Let us define the function $F_c : [0,1] \to \Re$, 
\begin{equation}\label{eq:defnF}
F_c(x) = \frac{cx^2 + (1-c)(1-x)^2 \cdot \frac{cx}{c+x}}{cx+(1-c)(1-x)^2},
\end{equation}
where $0 < c \leq 1$ (see Fig. \ref{fig:Fc}).

\begin{figure}
\centering
\includegraphics[width=0.5\textwidth]{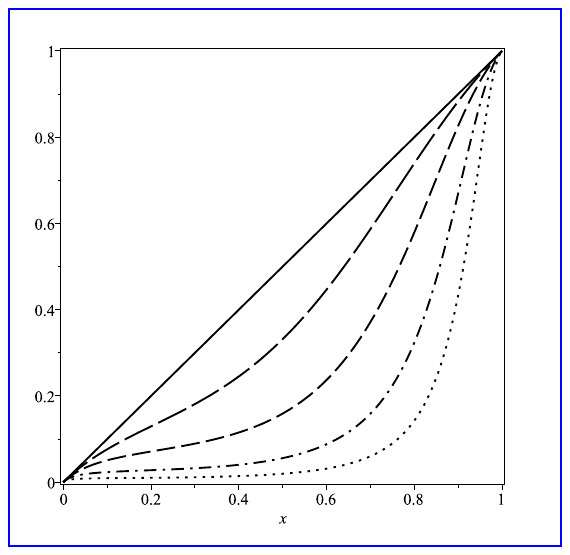}
\caption{Illustration of the function $F_c(x)$ for different values of $c$. Notation: continuous line for $c=1$, long dashed line for $c=0.3$, dashed line for $c=0.1$, dashed-dotted line for $c=0.03$, and dotted line for $c=0.01$.}
\label{fig:Fc}
\end{figure}

The next lemma can be found as \cite[Lemma 1]{DLV}.

\begin{lemma}\label{lem:Fproperties}
For any $c \in (0,1)$, 
\begin{enumerate}
\item[(a)] $F_c$ is smooth.
\item[(b)] $F_c$ is strictly increasing on its domain.
\item[(c)] $F_c(0) = 0$ and $F_c(1) = 1$.
\item[(d)] $(F_c)'_{+}(0)=(F_c)'_{-}(1)=1$.
\end{enumerate}
Furthermore,
\begin{enumerate}
\item[(e)] $F_1(x) = x$ for all $x \in [0,1]$.
\item[(f)] 
$\lim_{c \to 0+0} F_c(x) = 0$ for all $x \in [0,1)$.
\end{enumerate}
\end{lemma}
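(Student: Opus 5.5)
Since $F_c$ is an explicit rational function, the plan is to verify (a)--(f) directly from the formula, writing $F_c = P_c/Q_c$ with
\[
Q_c(x)=cx+(1-c)(1-x)^2,\qquad P_c(x)=cx^2+(1-c)(1-x)^2\frac{cx}{c+x}.
\]

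\emph{Smoothness (a).} For $c\in(0,1)$ and $x\in[0,1]$, both summands of $Q_c$ are nonnegative. They cannot vanish simultaneously, since $cx=0$ forces $x=0$, and then $(1-c)(1-x)^2=1-c>0$. Hence $Q_c>0$ on $[0,1]$. Also $c+x\ge c>0$ there. So $F_c$ is a quotient of smooth functions with nonvanishing denominator, and is smooth on a neighborhood of $[0,1]$.

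\emph{Endpoint values and the limiting cases (c), (e), (f).}
\begin{itemize}
\item[(c)] Substituting gives $F_c(0)=0/(1-c)=0$ and $F_c(1)=c/c=1$.
\item[(e)] At $c=1$ the $(1-c)$-terms drop out, leaving $F_1(x)=x^2/x=x$ for $x>0$; continuity covers $x=0$.
\item[(f)] Fix $x\in(0,1)$ and let $c\to 0^+$. The numerator tends to $0$, because $cx^2\to 0$ and $cx/(c+x)\to 0$. The denominator tends to $(1-x)^2>0$. So $F_c(x)\to 0$. The case $x=0$ is trivial since $F_c(0)=0$.
\end{itemize}

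\emph{One-sided derivatives (d).} The plan is a first-order expansion at each endpoint.
\begin{itemize}
\item Near $x=0$: $cx/(c+x)=x+O(x^2)$, so $P_c(x)=(1-c)x+O(x^2)$. Also $Q_c(x)=(1-c)+O(x)$. Hence $F_c(x)=x+O(x^2)$, giving $(F_c)'_+(0)=1$.
\item Near $x=1$: put $t=1-x$. Then $P_c=c(1-t)^2+O(t^2)=c-2ct+O(t^2)$ and $Q_c=c-ct+O(t^2)$. So $F_c=1-t+O(t^2)$, giving $(F_c)'_-(1)=1$.
\end{itemize}

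\emph{Strict monotonicity (b).} This is the main obstacle. The plan is to write $F_c$ as a convex combination
\[
F_c(x)=\lambda(x)\,x+(1-\lambda(x))\,\frac{cx}{c+x},\qquad \lambda(x)=\frac{cx}{Q_c(x)}\in[0,1].
\]
Both $x$ and $cx/(c+x)$ are increasing, but $\lambda$ also varies with $x$, so the convex-combination form alone does not give monotonicity. The derivative is
\[
F_c'=\lambda\cdot 1+(1-\lambda)\frac{c^2}{(c+x)^2}+\lambda'\Bigl(x-\frac{cx}{c+x}\Bigr).
\]
Here $x-\frac{cx}{c+x}=\frac{x^2}{c+x}\ge 0$. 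So the first two terms are positive, and only $\lambda'<0$ can cause trouble.

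If this sign argument does not close cleanly, the fallback is brute force:
\begin{itemize}
\item Clear denominators in $F_c'$ to get $F_c'=N_c(x)/\bigl((c+x)^2Q_c(x)^2\bigr)$, where $N_c$ is a polynomial in $x$ and $c$.
\item Show $N_c>0$ on $[0,1]$ for $c\in(0,1)$. The approach is to expand $N_c$ in the variables $x$ and $1-x$ and check that every coefficient is nonnegative, with at least one strictly positive.
\end{itemize}

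Organizing $N_c$ so that positivity is evident is where the real work lies. This is the result stated as \cite[Lemma 1]{DLV}, which would justify citing it for (b) if the algebra becomes unwieldy.
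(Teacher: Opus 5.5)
Your verification of (a) and (c)--(f) is correct. It is a genuinely different route from the paper's, which gives no argument for this lemma and simply quotes \cite[Lemma 1]{DLV}.

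The only unfinished point is (b). You stop before determining the sign of $\lambda'$ and suspect it may be negative, but it is not. Using $Q_c'(x)=c-2(1-c)(1-x)$,
\[
\lambda'(x)=\frac{c\,Q_c(x)-cx\,Q_c'(x)}{Q_c(x)^2}=\frac{c(1-c)(1-x)(1+x)}{Q_c(x)^2}\geq 0 \qquad (0\le x\le 1).
\]
This is also visible from the fact that $1/\lambda(x)=1+\frac{(1-c)(1-x)^2}{cx}$ is decreasing on $(0,1]$. So all three terms in your expression for $F_c'$ are nonnegative, and since $0\le\lambda\le 1$,
\[
F_c'(x)\geq \lambda+(1-\lambda)\frac{c^2}{(c+x)^2}\geq \frac{c^2}{(c+x)^2}>0 \qquad (0\le x\le 1).
\]
This proves (b) without the polynomial brute force or the fallback citation.

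There is also a derivative-free version. $\lambda$ increases from $0$ to $1$, shifting weight toward the larger of the two strictly increasing functions $x\geq \frac{cx}{c+x}$. Hence for $0\le x<y\le 1$ one gets $F_c(y)\geq \lambda(x)\,y+(1-\lambda(x))\frac{cy}{c+y}>F_c(x)$.

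Compared with the paper's bare citation, your route gives a short self-contained proof and explains \emph{why} $F_c$ is monotone. The citation is shorter but defers everything, including the only nontrivial item (b), to \cite{DLV}.
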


By Lemma~\ref{lem:Fproperties}, the range of $F_c$ is $[0,1]$ for all $c \in (0,1)$. Next, define $f_c : \left[ - \frac{\pi}{2}, + \frac{\pi}{2} \right] \to \left[ - \frac{\pi}{2}, + \frac{\pi}{2} \right]$ as
\begin{equation}\label{eq:f}
f_c(\theta) = \pi F_c\left( \frac{\theta}{\pi} + \frac{1}{2} \right) - \frac{\pi}{2}.
\end{equation}
Then, for any $0 < c < 1$, $f_c$ is a linear image of $F_c$.

Set $g_c(\theta) = - f_c(-\theta)$. Clearly, the domain and the range of $g_c$ is $\left[ -\frac{\pi}{2}, \frac{\pi}{2} \right]$. In Remark~\ref{rem:elemprop} (see \cite[Remark 2]{DLV}) we summarize a few properties of $f_c$ and $g_c$.

\begin{remark}\label{rem:elemprop}
For any $0 < c \leq 1$, we have  
$f_c \left( \frac{\pi}{2} \right)=g_c \left( \frac{\pi}{2} \right)=\frac{\pi}{2}$, 
$f_c \left( -\frac{\pi}{2} \right)=g_c \left( -\frac{\pi}{2} \right)=-\frac{\pi}{2}$.
\end{remark}

For any $0 < c \leq 1$, $0 \leq \theta \leq 2\pi$ and $\varphi \in \Re$, we set
\begin{equation}\label{eq:a}
a_c(\theta, \varphi) = \frac{ \cos^2 \varphi \cos^2 f_c(\theta)}{ \cos^2 \varphi\cos^2 f_c(\theta) + \sin^2 \varphi \cos^2 g_c(\theta) },
\end{equation}
as illustrated by Fig. \ref{fig:ac}. We observe that if $\cos \varphi \neq 0$ (i.e. if $\varphi$ is not of the form $\frac{\pi}{2}+k\pi$ for some integer $k$), then
$a_c(\theta, \varphi) = \frac{1}{1 + \tan^2 \varphi \frac{\cos^2 g_c(\theta)}{\cos^2 f_c(\theta)} }$.
\begin{figure}[!ht]
\centering
\includegraphics[width=0.48\textwidth]{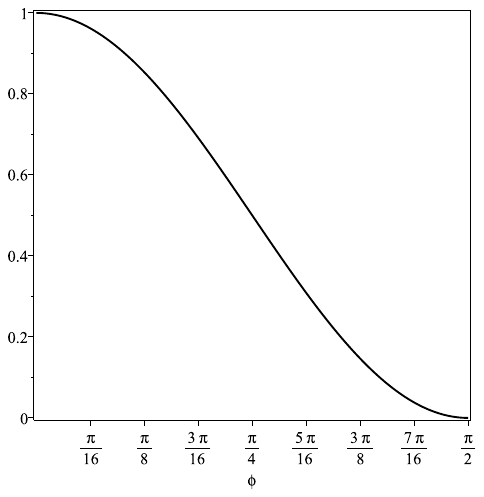}
\includegraphics[width=0.48\textwidth]{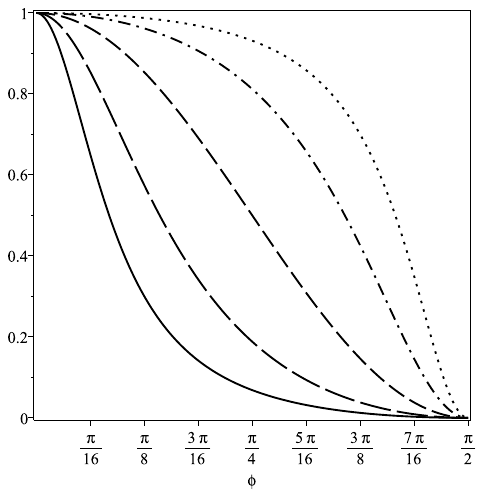}
\caption{Diagram of $a_c(\theta,\varphi)$ versus $\varphi$ for two different values of $c$. If $c=1$ then $a_c=\cos(2\varphi)$ for all values of $\theta$ (left panel), whereas for other values of $c$ and $\theta\neq 0$, it varies. The right panel shows the values of $a_c(\theta,\varphi)$ for $c=0.1$, with $\theta=-0.45\pi$ (continuous line), $\theta=-0.225 \theta$ (long dashed line), $\theta=0$ (dashed line), $\theta=0.225\pi$ (dashed-dotted line), and $\theta=0.45\pi$ (dotted line).}
\label{fig:ac}
\end{figure}

For any unit vector $u = (\cos \theta \cos \varphi, \cos \theta \sin \varphi, \sin \theta ) \in S$, define the function $\rho_c : \S^2 \to \Re$ by
\begin{equation}\label{eq:rho}
\rho_c(u) =  a_c(\theta, \varphi) \sin f_c(\theta) + (1 - a_c(\theta, \varphi)) \sin g_c(\theta),
\end{equation}
(see Fig. \ref{fig:rho}) and set
\begin{equation}\label{eq:R}
R_{c,d}(u) = R \left( 1+ d \rho_c(u) \right).
\end{equation}

\begin{figure}[!ht]
\centering
\includegraphics[width=0.7\textwidth]{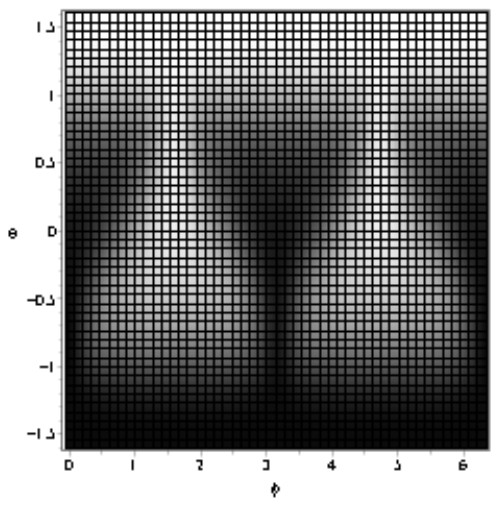}
\caption{Contour plot of $\rho_{0.1}(\theta,\varphi)$. Darker color denotes smaller value of the function.}
\label{fig:rho}
\end{figure}
%

%
%
Observe that by Remark~\ref{rem:elemprop}, both $\rho_c$ and $R_{c,d}$ are well-defined at the two poles, with parameters $\theta = \pm \frac{\pi}{2}$.

Finally, we define the set $K(c,d)$ (see Fig. \ref{fig:K}) as
\[
K(c,d) = \left\{ v_N + \lambda (u-v_N) : u \in S, 0 \leq \lambda \leq R_{c,d}(u) \right\},
\]
and remark that by definition, for any value of the parameters we have $0 \leq a_c(\theta, \varphi) \leq 1$. Thus, $\rho_c(u)$ is a convex combination of $f_c(\theta)$ and $g_c(\theta)$. This, by Lemma~\ref{lem:Fproperties}, implies that $-1 \leq \rho_c(u) \leq 1$ and $R(1-d) \leq R_{c,d}(u) \leq R(1+d)$, implying that $K(c,d)$ exists for all $0 \leq d < 1$. 

\begin{figure}[!ht]
\centering
\includegraphics[width=0.31\textwidth]{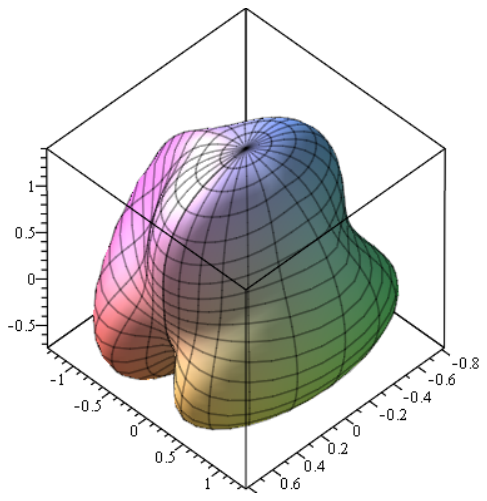}
\includegraphics[width=0.31\textwidth]{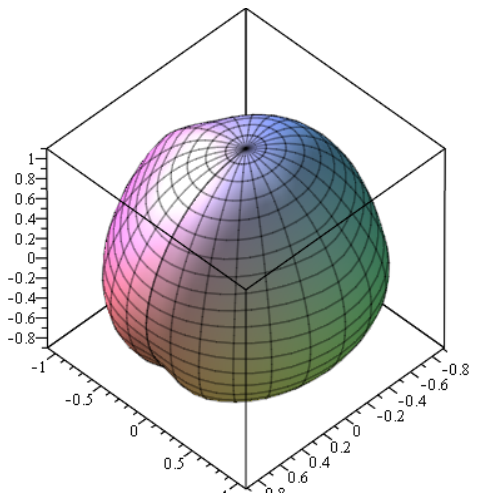}
\includegraphics[width=0.31\textwidth]{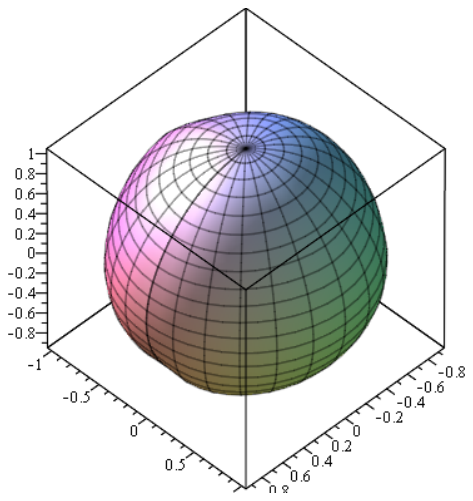}
\caption{The set $K(0.1,d)$ for some values of $d$. Left panel: $d=0.4$, middle panel: $d=0.1$, right panel: $d=0.05$.}
\label{fig:K}
\end{figure}

Next, we prove that the properties described in (A)-(B) are satisfied for $K(c,d)$ for all $0 < d < 1$ and $0 < c \leq 1$.
To show them, we recall \cite[Lemmas 3, 4]{DLV}; we state them using the notation introduced in this subsection.

\begin{lemma}\label{lem:C2}
For any $0 < c < 1$ and $0 < d < 1$, the function $R_{c,d} : S \to \Re$ is $C^2$-class differentiable.
\end{lemma}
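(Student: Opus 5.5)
Since $R_{c,d}=R(1+d\rho_c)$ with $R,d$ constants, it suffices to show that $\rho_c : S \to \Re$ is $C^2$. I would work in the spherical coordinates $(\theta,\varphi)$ and split $S$ into two regions: the complement of small neighbourhoods of the two poles $\theta=\pm\frac{\pi}{2}$, and those neighbourhoods themselves. Away from the poles the map $(\theta,\varphi)\mapsto u$ is a local diffeomorphism, so there $C^2$-smoothness of $\rho_c$ on $S$ is equivalent to $C^2$-smoothness in $(\theta,\varphi)$.

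Away from the poles, $f_c$ and $g_c$ are smooth by Lemma~\ref{lem:Fproperties}(a), being affine reparametrizations of $F_c$. Moreover $|f_c(\theta)|,|g_c(\theta)|<\frac{\pi}{2}$ for $|\theta|<\frac{\pi}{2}$, by (b) and (c). Hence $\cos f_c(\theta)$ and $\cos g_c(\theta)$ are nonzero, so the denominator of $a_c$ in (\ref{eq:a}) is a sum of two squares that never vanish simultaneously. Therefore $a_c$ is smooth, and $\rho_c$ in (\ref{eq:rho}) is smooth there. Periodicity in $\varphi$ is clear, since $a_c$ depends only on $\cos^2\varphi$ and $\sin^2\varphi$.

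The main obstacle is the poles, where $\varphi$ is not a coordinate and $a_c$ need not even have a limit. I would use local Cartesian coordinates $(u_1,u_2)=(\cos\theta\cos\varphi,\cos\theta\sin\varphi)$ near the North Pole, with $t=\frac{\pi}{2}-\theta$. By Lemma~\ref{lem:Fproperties}(c),(d), $F_c$ is smooth up to the endpoint with slope $1$ there. Hence $\cos f_c(\theta)=\sin(t+O(t^2))$ and $\cos g_c(\theta)=\sin(\kappa t+O(t^2))$, where $\kappa=(F_c)'_+(0)=1$. Consequently
\[
a_c=\frac{u_1^2\,\alpha(t)}{u_1^2\,\alpha(t)+u_2^2\,\beta(t)},
\]
where $\alpha=\cos^2 f_c/\cos^2\theta$ and $\beta=\cos^2 g_c/\cos^2\theta$ are smooth and tend to $1$ as $t\to0$. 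So $a_c$ is bounded but possibly discontinuous at the pole. The decisive point is that it multiplies
\[
\sin f_c(\theta)-\sin g_c(\theta),
\]
since $\rho_c=\sin g_c+a_c(\sin f_c-\sin g_c)$. This factor equals $1-\cos(\cdots)$ minus $1-\cos(\cdots)$, and is $O(t^2)\cdot(\text{difference of second-order data})$.

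To conclude at the pole, I would expand carefully. Using (d) and the smoothness of $F_c$, $\sin f_c-\sin g_c = O(t^3)$, i.e. $O(|u|^3)$ in the Cartesian variables. Meanwhile $a_c$ is a degree-$0$ homogeneous smooth function of $(u_1,u_2)$ perturbed by smooth factors, so each derivative of $a_c$ costs one factor $|u|^{-1}$. Thus the product and its first and second derivatives are $O(|u|^{3-k})$ for $k\le 2$; these tend to $0$, which gives $C^2$ with vanishing second-order correction at the pole. Finally, $\sin g_c(\theta)$ is a smooth function of $\theta$ with vanishing odd-order structure near $t=0$, so it is a $C^2$ function of $|u|^2$.

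The step I expect to be hardest is verifying the cubic vanishing of $\sin f_c-\sin g_c$. If (d) only gives matching first derivatives, I would compute $F_c''$ at the endpoints from (\ref{eq:defnF}) directly, presumably as \cite[Lemma 3]{DLV} does. The South Pole is handled symmetrically, using $g_c(\theta)=-f_c(-\theta)$.
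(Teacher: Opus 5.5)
The paper gives no argument of its own for this lemma. It imports \cite[Lemma 3]{DLV}, which is legitimate because $R_{c,d}$, as a function on $S$, is exactly the radial function of the Euclidean construction; the non-Euclidean geometry only enters later, through the volume element in the centroid condition. Your proposal is therefore a self-contained substitute for a citation, and its core is correct. Away from the poles everything is smooth, since $|f_c|,|g_c|<\frac{\pi}{2}$. At the North Pole the splitting $\rho_c=\sin g_c+a_c(\sin f_c-\sin g_c)$ is the right one. The bound $|\partial^j a_c|=O(|u|^{-j})$ for $j\le 2$ holds, even though $\alpha,\beta$ are smooth in $t=\arcsin|u|$ but not in $u$: their $u$-derivatives are only $O(1)$ and $O(|u|^{-1})$. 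Combined with the bounds $O(|u|^{3-k})$ on the derivatives of $\sin f_c-\sin g_c$, this makes the product $C^2$ with derivatives vanishing at the pole.

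Your worry about needing $F_c''$ is unfounded. By (c) and (d), both $\frac{\pi}{2}-f_c$ and $\frac{\pi}{2}-g_c$ equal $t+O(t^2)$, so $\sin f_c$ and $\sin g_c$ both equal $\cos t+O(t^3)$. The second derivatives of $F_c$ only fix the cubic coefficient, namely $(F_c''(0)+F_c''(1))/(2\pi)=-\frac{2}{\pi c(1+c)}\neq 0$. So the vanishing order is exactly three, and $\rho_c$ is in fact not $C^3$ at the poles, consistent with the lemma claiming only $C^2$.

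The one step that is wrong as written is the last one. $\sin g_c$ does \emph{not} have vanishing odd-order terms and is \emph{not} a $C^2$ function of $|u|^2$. Near the North Pole $\frac{\pi}{2}-g_c(\theta)=\pi F_c(t/\pi)$ and $F_c''(0)=-2/c$, so $\sin g_c=1-\frac{t^2}{2}+\frac{t^3}{\pi c}+O(t^4)$, which contains $|u|^3=(|u|^2)^{3/2}$. The conclusion you need is nevertheless true, for a different reason. If $\psi\in C^2[0,\delta)$ and $\psi'(0)=0$, then $u\mapsto\psi(|u|)$ is $C^2$: with $r=|u|$, its Hessian $(\psi''(r)-\psi'(r)/r)\,uu^{T}/r^{2}+(\psi'(r)/r)\,I$ tends to $\psi''(0)I$. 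Here $\psi'(0)=0$ because $\sin g_c=\cos(\pi F_c(t/\pi))$ and $F_c(0)=0$. Alternatively, write $\sin g_c=1-\frac12\arcsin^2|u|+O(t^3)$, note that $\arcsin^2$ is even and analytic, and treat the remainder with the same derivative count you used for the product. With this repair, and the South Pole reduced to the North Pole via $\rho_c(-\theta,\varphi)=-\rho_c(\theta,\varphi+\frac{\pi}{2})$, your argument proves the lemma.
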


\begin{lemma}\label{lem:equilibria}
For any $0 < c < 1$ and $ 0 < d < 1$, the only equilibrium points of $K(c,d)$ with respect to the `origin' $v_N$ are $R_{c,d}(u_N) u_N$ and $R_{c,d}(u_S) u_S$, where $u_N=(0,0,1)$ and $u_S=(0,0,-1)$.
\end{lemma}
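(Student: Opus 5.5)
The statement is quoted from \cite[Lemma 4]{DLV}, so the plan is to reproduce the Euclidean argument. Because we work in $H_N$ via the gnomonic projection, it applies verbatim. By Lemma~\ref{lem:sphericalvolume}(i), spherical distance from $v_N$ is the increasing function $\arctan$ of the Euclidean distance $|\cdot|_N$. Hence the critical points of the spherical distance from $v_N$ restricted to $\bd(g_N^{-1}(K(c,d)))$ correspond exactly to the critical points of the Euclidean distance from $v_N$ on $\bd(K(c,d))$. For a star-shaped body with $C^2$ radial function (Lemma~\ref{lem:C2}), these are exactly the critical points of $R_{c,d}$ on $S$. Since $R_{c,d} = R(1+d\rho_c)$ with $R,d>0$, the task reduces to showing that $\rho_c$ has exactly two critical points on $S$, namely $u_N$ and $u_S$.

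To locate the critical points of $\rho_c$, I use the coordinates $(\theta,\varphi)$ away from the poles. Writing $\rho_c = \sin g_c(\theta) + a_c(\theta,\varphi)\,(\sin f_c(\theta) - \sin g_c(\theta))$, the $\varphi$-derivative is $\partial_\varphi a_c \cdot (\sin f_c - \sin g_c)$.

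First step: show $\sin f_c(\theta) \neq \sin g_c(\theta)$ for $\theta\in(-\pi/2,\pi/2)$. By the shape of $F_c$ (Lemma~\ref{lem:Fproperties}), $f_c(\theta) < \theta < g_c(\theta)$ for $c<1$; this needs $F_c(x)<x$ on $(0,1)$, which I would verify from the explicit formula (\ref{eq:defnF}).

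Second step: from (\ref{eq:a}), $\partial_\varphi a_c$ vanishes only when $\sin\varphi\cos\varphi=0$. So any critical point must have $\varphi\in\{0,\pi/2,\pi,3\pi/2\}$. On these meridians $a_c\in\{0,1\}$, so $\rho_c$ equals $\sin f_c(\theta)$ or $\sin g_c(\theta)$.

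Third step: these are strictly increasing in $\theta$ by Lemma~\ref{lem:Fproperties}(b), with derivatives $\cos f_c\, f_c'$ and $\cos g_c\, g_c'$. For $|\theta|<\pi/2$ we have $|f_c|,|g_c|<\pi/2$, and I need $F_c'>0$ strictly, which I would check. One subtlety: $\partial_\theta a_c$ is multiplied by $a_c(1-a_c)$-type factors that vanish at $a_c\in\{0,1\}$, so the $\theta$-derivative of $\rho_c$ is nonzero there. Hence there are no critical points off the poles.

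Finally, the poles are critical points because they are the global maximum and minimum: $\rho_c\le 1$ with equality only at $u_N$, and symmetrically $\rho_c \ge -1$ with equality only at $u_S$. This gives exactly the two equilibrium points $R_{c,d}(u_N)u_N$ and $R_{c,d}(u_S)u_S$ (measured from $v_N$). The main obstacle is the careful verification of the strict inequalities $F_c(x)<x$ and $F_c'>0$ on $(0,1)$, together with the computation of $\partial_\varphi a_c$. Both are routine algebra, which I would take from \cite{DLV}.
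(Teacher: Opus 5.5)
Your proposal is correct and follows the paper's route. The paper gives no separate argument: it identifies $N^3$ with $H_N$ via the gnomonic projection, under which spherical distance from $v_N$ is an increasing function of the Euclidean one, and then invokes the Euclidean result \cite[Lemma 4]{DLV}, which amounts to $\rho_c$ having no critical points off the poles — exactly what you prove.

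The two facts you defer do hold. Write $F_c(x)=w(x)\,x+(1-w(x))\frac{cx}{c+x}$ with $w(x)=\frac{cx}{cx+(1-c)(1-x)^2}$. For $0<c<1$, $w$ is strictly increasing and $0<w<1$ on $(0,1)$, while $\frac{cx}{c+x}<x$ there. This gives $F_c(x)<x$ and $F_c'(x)>0$ on $(0,1)$.
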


Now we prove that suitable elements of $\mathcal{F}$ satisfy the properties (C)-(E).
For this purpose, we set some arbitrary small positive constant $\varepsilon > 0$, and observe that the definition of $\rho_c$ implies that $\max \{ \rho_c(u) : u \in S \} = 1$, and hence, if $0 < d < \varepsilon$, then $d_H(K_n(c,d), R B) < R \varepsilon$. Thus, the assertion of Theorem~\ref{thm:3d} for $\Sph^3$ readily follows from Lemmas~\ref{lem:centering} and \ref{lem:convexity}.

First, in Lemma~\ref{lem:centering} we examine (D). 

\begin{lemma}\label{lem:centering}
There exist constants $0 < c_1 < c_2 < 1$, $0 < d_0 < 1$ and a function $F:(0,d_0) \to [c_1,c_2]$ such that for any $d \in (0,d_0)$
the (spherical) centroid of $K(F(d),d)$ is $v_N$.
\end{lemma}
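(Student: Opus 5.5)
We work in $H_N$ with $v_N$ as origin and use Lemma~\ref{lem:momentcomp} and its analogues. The centroid of $K(c,d)$ is $v_N$ iff the first moments with respect to the three coordinate greatspheres $\{x_1=0\}$, $\{x_2=0\}$, $\{x_3=0\}$ vanish. These are
\[
M_i(c,d)=\int_{x\in K(c,d)} \frac{x_i}{(1+|x|_N^2)^{5/2}}\,d\lambda_3(x), \qquad i=1,2,3 .
\]

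\textbf{Symmetry.} First I use the symmetries of the construction to dispose of $M_1$ and $M_2$. The function $a_c(\theta,\varphi)$ depends on $\varphi$ only through $\cos^2\varphi$ and $\sin^2\varphi$. Hence $\rho_c$, $R_{c,d}$ and $K(c,d)$ are invariant under the reflections $x_1\mapsto -x_1$ and $x_2\mapsto -x_2$ (that is, $\varphi\mapsto-\varphi$ and $\varphi\mapsto\pi-\varphi$). The weight $(1+|x|_N^2)^{-5/2}$ is invariant as well, so $M_1(c,d)=M_2(c,d)=0$ for all $c,d$. It remains to find $c=F(d)$ with $M_3(c,d)=0$.

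\textbf{Polar coordinates and the normalized moment.} Write $M_3$ in polar coordinates $x=ru$, $u=(\cos\theta\cos\varphi,\cos\theta\sin\varphi,\sin\theta)$:
\[
M_3(c,d)=\int_{S}\sin\theta\, \Phi\big(R(1+d\rho_c(u))\big)\,du,\qquad \Phi(t)=\int_0^t \frac{r^3}{(1+r^2)^{5/2}}\,dr .
\]
Since $\int_S\sin\theta\,du=0$, a Taylor expansion of $\Phi$ around $R$ gives
\[
M_3(c,d)=d\,R\,\Phi'(R)\int_S \sin\theta\,\rho_c(u)\,du + O(d^2),
\]
with the error uniform in $c\in(0,1]$ because $|\rho_c|\le 1$ and $\Phi$ is smooth. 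I then define $G(c,d)=M_3(c,d)/(dR\Phi'(R))$ for $d>0$ and extend it by $G(c,0)=I(c)=\int_S\sin\theta\,\rho_c\,du$. This $G$ is continuous on $(0,1]\times[0,d_0)$. Since $\Phi'(R)>0$, the zeros of $G(\cdot,d)$ for $d>0$ are exactly the zeros of $M_3(\cdot,d)$.

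\textbf{Sign change of $I$.} This is the key step. For $c=1$ we have $f_1=g_1=\mathrm{id}$, so $\rho_1(u)=\sin\theta$ and $I(1)=\int_S\sin^2\theta>0$. As $c\to 0$, Lemma~\ref{lem:Fproperties}(f) gives $f_c(\theta)\to-\pi/2$ for $\theta<\pi/2$, while $g_c(\theta)\to\pi/2$ for $\theta>-\pi/2$. So $\rho_c$ tends pointwise to $-a+(1-a)=1-2a$, where $a$ is the limit of $a_c$. One then has to check that $I(c)<0$ for some small $c_1$; this is the computation done in \cite{DLV}. Since the weight $\sin\theta$ enters identically to the Euclidean case, the sign of $I$ is exactly the Euclidean one. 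I therefore quote that $I(c_1)<0<I(c_2)$ for suitable $0<c_1<c_2<1$, choosing $c_2$ close to $1$ by continuity.

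\textbf{Choosing $F(d)$.} By continuity of $G$ there is $d_0$ such that $G(c_1,d)<0<G(c_2,d)$ for all $d\in[0,d_0)$. For each such $d$, let $F(d)\in[c_1,c_2]$ be a zero of $G(\cdot,d)$ given by the intermediate value theorem. For instance, take the infimum of the zero set, which is closed; no continuity of $F$ is required. Then $M_3(F(d),d)=0$, so $c(K(F(d),d))=v_N$.

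\textbf{Main obstacle.} The main obstacle is the uniformity of the $O(d^2)$ remainder together with continuity of $I(c)$ in $c$ near the endpoints. Both follow from dominated convergence, since all integrands are bounded.
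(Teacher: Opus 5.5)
Your proposal is correct and follows essentially the same route as the paper. You reduce to $M_3$ by symmetry, expand to first order in $d$ so that the sign is governed by $\int_S \sin\theta\,\rho_c$, and use that this integral is positive at $c=1$ and negative as $c\to 0^+$ by the computation in \cite{DLV}; the intermediate value theorem then gives $F(d)$. Your normalization $M_3/(dR\Phi'(R))$ with a remainder uniform in $c$, together with the choice of $c_2<1$ by continuity of $I$, is a slightly more careful version of the step the paper leaves as ``combining'' the regimes $c=1$ and $c=c_0$.
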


\begin{proof}
We start the proof with the observation that $K(c,d)$ is axially symmetric to the translate of the $x_3$-axis of $\Re^4$ by $v_N$. Hence, the centroid of $K(c,d)$ is of the form $(0,0,z(c,d))$, implying that by Lemma~\ref{lem:momentcomp}, the (spherical) centroid of $K(c,d)$ is $v_N$ iff the (spherical) first moment of $K(c,d)$ with respect to the plane of $H_N$ with equation $\{ x_3 = 0 \}$ is $0$. We denote this first moment by $M_3(K(c,d))$.

Changing to $3$-dimensional spherical coordinates and by Lemma~\ref{lem:momentcomp}, $M_3(K(c,d))$ can be written as
\begin{multline*}
M_3(K(c,d)) = \int_{x \in K} \frac{x_3}{\left( 1+ |x|_N^2 \right)^{5/2}} d \lambda_3(x) =\\
= 
\int_{0}^{2\pi}\int_{-\frac{\pi}{2}}^{\frac{\pi}{2}}\int_{0}^{R(1+d\rho_c)} \frac{r\sin(\theta)}{(1+r)^{5/2}}\cdot r^2\cos(\theta) \,dr\,d\theta\,d\varphi,
\end{multline*}
yielding that
\begin{equation}\label{eq:firstmoment}
M_3(K(c,d)) = \int_{0}^{2\pi}\int_{-\frac{\pi}{2}}^{\frac{\pi}{2}}\left(-\frac{1}{3}\cdot\frac{3R^2(1+d\rho_c)^2+2}{((R^2(1+d\rho_c)^2+1)^\frac{3}{2}}+\frac{2}{3}\right)\cdot\cos(\theta)\sin(\theta)\,d\theta\,d\varphi .
\end{equation}

We note that by the properties of the trigonometric functions, $M_3(K_m(c,0))=0$ for all $0 < c \leq 1$.

Observe that the integrand in (\ref{eq:firstmoment}), as well as its partial derivative with respect to $d$, is continuous on the domain $0 < c \leq 1$ and $0 \leq d < R$. Hence, the same holds for $M_3(K(c,d))$.
Note that $\rho_1(\theta, \varphi) = \sin(\theta)$. Thus,
\begin{multline*}
M_3(K(1,d)) =\\
=2\pi\int_{-\frac{\pi}{2}}^{\frac{\pi}{2}}\left(-\frac{1}{3}\cdot\frac{3R^2(1+d\sin(\theta))^2+2}{(R^2(1+d\sin(\theta))^2+1)^\frac{3}{2}}+\frac{2}{3}\right)\cdot\cos(\theta)\sin(\theta)\,d\theta
\end{multline*}
Applying the substitution $\sin \theta = x$, we obtain that
\begin{equation}\label{eq:firstmomentc1_2}
M_3(K(1,d)) = 2\pi\int_{-1}^{1}\left(-\frac{1}{3}\cdot\frac{3R^2(1+d\cdot x)^2+2}{(R^2(1+d\cdot x)^2+1)^\frac{3}{2}}+\frac{2}{3}\right)\cdot x\,dx
\end{equation}

First, we consider the value of $M_3(K_m(c,d))$ near the point $(c,d)=(1,0)$. Recall that $M_3(K(1,0))=0$.
On the other hand, by Leibniz's integral rule, we have
\[
\left.\frac{\partial M_{3}(K_m(1,d))}{\partial d}\right|_{d=0} =2\pi\int_{-1}^{1}\frac{x^2R^4}{(R^2+1)^\frac{5}{2}}\,dx = \frac{4\pi}{3}\cdot \frac{R^4}{(R^2+1)^\frac{5}{2}} >0.
\]
Thus, there is some value $d_0> 0$ such that $K(1,d)) > 0$ for all values of $d$ with $0 < d \leq d_0$.

Next, we consider the value of $M_3(K(c,d))$ near the point $(c,d)=(0,0)$. Recall that $M_3(K(c,0))=0$ for any $0 < c \leq 1$.
On the other hand, by Leibniz's integral rule, for any $0 < c \leq 1$, we have
\begin{multline*}
\left.\frac{\partial M_{3}(K(c,d))}{\partial d}\right|_{d=0} = \\
=\int_{0}^{2\pi}\int_{-\frac{\pi}{2}}^{\frac{\pi}{2}}\left.\frac{\partial }{\partial d}\left(-\frac{1}{3}\cdot\frac{3R^2(1+d\rho_c)^2+2}{(R^2(1+d\rho_c)^2+1)^\frac{3}{2}}+\frac{2}{3}\right)\right|_{d=0}\cdot\cos(\theta)\sin(\theta)\,d\theta\,d\varphi
\end{multline*}
implying that
\[
\left.\frac{\partial M_{3}(K(c,d))}{\partial d}\right|_{d=0} = \frac{2\pi R^4}{(R^2+1)^\frac{5}{2}} \int_{-\frac{\pi}{2}}^{\frac{\pi}{2}}\rho_c\cdot\cos(\theta)\sin(\theta)\,d\theta.
\]

We recall from the proof of Lemma 5 of \cite{DLV} that
\[
\lim_{c\to 0^+}\rho_c(\theta,\varphi)=\frac{(\frac{\pi}{2}-\theta)^4\sin^2 \varphi-(\frac{\pi}{2}+\theta)^4\cos^2 \varphi}{(\frac{\pi}{2}+\theta)^4\cos^2 \varphi+(\frac{\pi}{2}-\theta)^4\sin^2 \varphi}.
\]
Let us define the above expression as $\rho_0(\theta,\varphi)$.
Since $| \rho_c(\theta,\varphi) | \leq 1$ for all values $0 < c \leq 1$, $0 \leq  \varphi \leq 2\pi$, $-\frac{\pi}{2} < \theta < \frac{\pi}{2}$, and the constant function $(\theta,\varphi) \mapsto 1$ is integrable on $[0,2\pi] \times \left[-\frac{\pi}{2}, \frac{\pi}{2} \right]$, then, by Lebesgue's dominated convergence theorem \cite{Evans}, for any $0 \leq d < R$ we have
\begin{multline}\label{eq:firstmomentfinal}
\lim_{c\to 0^+} \left.\frac{\partial M_{3}(K(c,d))}{\partial d}\right|_{d=0} =\\
= \frac{2\pi R^4}{(R^2+1)^\frac{5}{2}} \int_{-\frac{\pi}{2}}^{\frac{\pi}{2}}\rho_c\cdot\cos(\theta)\sin(\theta)\,d\theta = \frac{2\pi R^4}{(R^2+1)^\frac{5}{2}}
\cdot (-4.6335890\ldots) < 0.
\end{multline}
Thus, there is some value $c_0 > 0$ such that $\left.\frac{\partial M_{3}(K(c_0,d))}{\partial d}\right|_{d=0} < 0$, implying that there is some value $d_0'>0$ such that $M_{3}(K(c_0,d))$ for all $0 < d \leq d_0'$. Combining this result with our investigation near the point $(c,d)=(1,0)$, the assertion follows.
\end{proof}

We note that the integral in (\ref{eq:firstmomentfinal}) is also computed in \cite{DLV}, where the authors incorrectly stated that its value is $-2.3168\ldots$; nevertheless, this typo in \cite{DLV} does not influence the correctness of their proof.

Next, we deal with Property (C). To do it, following \cite{Schneider} we denote by $\mathcal{C}^2_+$ the family of convex bodies whose boundary is $C^2$-class differentiable at every point, and has strictly positive Gaussian curvature everywhere.

\begin{lemma}\label{lem:convexity}
Let $[c_1,c_2]$ be defined as in Lemma~\ref{lem:centering}. Then there is a value $0 < d^{\star} < 1$ such that for any 
$0 < d < d^{\star}$ and $c \in [c_1,c_2]$, we have $K(c,d) \in \mathcal{C}^2_+$. 
\end{lemma}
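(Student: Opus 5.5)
We would prove Lemma~\ref{lem:convexity} by a compactness and perturbation argument. The point is that $K(c,d)$ is a small $C^2$ perturbation of the Euclidean ball $RB$ in $H_N$. Convexity of $g_N^{-1}(K)$ in $N^3$ is equivalent to Euclidean convexity of $K$ in $H_N$. We will also use that gnomonic projection preserves $\mathcal{C}^2_+$, since $g_N$ is a projective map sending geodesics to segments and is a $C^\infty$ diffeomorphism between $N^3$ and $H_N$. Hence it suffices to show that the Euclidean body $K(c,d) \subset H_N$ has $C^2$ boundary with strictly positive Gaussian curvature. The boundary is the graph over $S$ of the radial function $R_{c,d}=R(1+d\rho_c)$. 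This is exactly the Euclidean statement treated in \cite{DLV}, so the construction carries over unchanged.

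The key steps are as follows.

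\begin{enumerate}
\item Using Lemma~\ref{lem:C2}, parametrize $\bd K(c,d)$ by $u \mapsto R_{c,d}(u)u$. Write the second fundamental form of a radial graph $r(u)=R(1+d\rho)$ in terms of $r$, its spherical gradient $\nabla r$ and its spherical Hessian $\nabla^2 r$. Up to a positive factor, the relevant symmetric matrix is
\[
r^2 I + 2\nabla r \otimes \nabla r - r\nabla^2 r .
\]
Its entries equal $R^2 I$ plus terms bounded by $C\,d\,(\|\rho_c\|_{C^2}+d\|\rho_c\|_{C^1}^2)$.

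\item Establish a uniform bound $\sup_{c\in[c_1,c_2]} \|\rho_c\|_{C^2(S)} < \infty$.

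\item Choose $d^\star$ so small that this matrix stays positive definite for all $d<d^\star$. Then the Gaussian curvature is positive everywhere, which gives convexity and membership in $\mathcal{C}^2_+$.
\end{enumerate}

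The main obstacle is step 2. Away from the poles, $\rho_c$ is a composition of smooth functions of $(\theta,\varphi,c)$. On the compact set $[c_1,c_2]$, where $c_1>0$, the functions $F_c$, $f_c$, $g_c$ depend smoothly and jointly on $(c,x)$. This holds because the denominator $cx+(1-c)(1-x)^2$ vanishes only when $c=0$ or ($x=0$ and $c=1$), and neither occurs here. Likewise, the denominator of $a_c$ is bounded away from zero because $\cos f_c$ and $\cos g_c$ do not vanish simultaneously off the poles. Hence the $C^2$ norms are uniformly bounded on any region $|\theta|\le \pi/2-\delta$.

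Near the poles, spherical coordinates are singular, and the $C^2$ regularity of Lemma~\ref{lem:C2} comes from the delicate argument of \cite[Lemma 3]{DLV}. We would revisit that proof and check that every estimate there depends continuously on $c$. This yields uniform $C^2$ bounds in a polar neighbourhood for $c\in[c_1,c_2]$. Should continuity in $c$ be unclear, a fallback is available: the map $c\mapsto \rho_c$ is continuous into $C^2(S)$ by that argument, so the image of the compact interval $[c_1,c_2]$ is bounded.
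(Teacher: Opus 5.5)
Your proposal is correct and takes essentially the paper's route. The paper's whole proof is the observation that $K(c,d)\subset H_N$ is literally the Euclidean body of \cite{DLV}, followed by a citation of \cite[Lemma 6]{DLV}; your steps 1--3 are the natural proof of that cited lemma (the second fundamental form of a radial graph, uniform $C^2$ bounds on $\rho_c$ for $c$ in a compact subinterval of $(0,1)$, then small $d$), and you defer the uniform estimate at the poles to \cite{DLV}, just as the paper defers everything. One advantage of your version is that it makes explicit that only the compactness of $[c_1,c_2]\subset(0,1)$ is used; this matters here because the interval comes from the spherical Lemma~\ref{lem:centering}, not from the Euclidean centering lemma of \cite{DLV}.
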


This lemma appeared as \cite[Lemma 6]{DLV}, and hence, we omit its proof. As a conclusion, we note that for any given $\varepsilon > 0$, if $0 < d < d^{\star}$ is sufficiently small, $K(F(d),d)$ satisfies the conditions (A)-(E) .

\subsection{The case $\V^3 = \HHH^3$}\label{subsec:hyperbolic}

In the proof we use the projective ball model of the hyperbolic space $\HHH^3$. In this model, the space is represented as the interior of the unit ball $\B^3$ of the Euclidean space $\Re^3$; here hyperbolic lines and planes are represented by the intersections of the set $\inter (\B^3)$ with Euclidean lines and planes, respectively.
Thus, a hyperbolic set in this model is a convex body iff it is represented by a convex body in the model. We note that whereas the model is not conformal, if $L$ is a hyperbolic line passing through the center $o$ of the model, then a hyperbolic plane $H$ is perpendicular to $L$ iff the segment representing $L$ in the model is perpendicular in the Euclidean sence to the circular disk representing $H$.

Our next lemma collects well known properties of the model.

\begin{lemma}\label{lem:hypvolume}
For any point $x \in \Re^3$, let $|x|$ denote the Euclidean distance of $x$ from the origin $o$. Then the following holds.
\begin{enumerate}
\item[(i)] For any point $x \in \HHH^3$, its hyperbolic distance from $o$ is
\[
d_H(o,x)= \arctanh |x| = \frac{1}{2} \ln \frac{1+|x|}{1-|x|}.
\]
\item[(ii)] For any Borel set $X \subset \HHH^3$ its hyperbolic volume is
\[
\mu_3(X) = \int_{x \in g_N(X)} \frac{1}{\left( 1- |x|^2 \right)^2} \, d \lambda_3(x)
\]
\end{enumerate}
\end{lemma}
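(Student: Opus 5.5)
The plan is to derive both statements from the hyperboloid model of Section~\ref{sec:prelim}, since the projective ball model is just the central projection of that model, in the same way as the gnomonic projection was used for $\Sph^3$ in Lemma~\ref{lem:sphericalvolume}. In (ii) the domain of integration should read $X$ itself, not $g_N(X)$: in the ball model no projection is needed.

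\emph{Identifying the two models.} Place $\inter(\B^3)$ in the hyperplane $\{x_4=1\}$ of $\mathbb{L}^4$ and send the point $x \in \inter(\B^3)$ to the point where the ray through $(x,1)$ meets the upper sheet of the hyperboloid. That point is
\[
p(x)=\frac{(x,1)}{\sqrt{1-|x|^2}},
\]
and indeed $\langle p(x),p(x)\rangle_L=-1$. Under this map, planes through the origin of $\mathbb{L}^4$ meet $\{x_4=1\}$ in Euclidean lines and planes. This is exactly why hyperbolic lines and planes are represented by chords and flat disks, as stated before the lemma. The origin $o$ of the ball corresponds to the apex $e_4=(0,0,0,1)$ of the hyperboloid.

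\emph{Proof of (i).} By the definition of the hyperbolic distance, $\cosh d_H(o,x) = -\langle p(x),e_4\rangle_L = 1/\sqrt{1-|x|^2}$. From this, $\sinh d_H(o,x)=|x|/\sqrt{1-|x|^2}$, and hence $\tanh d_H(o,x)=|x|$. This gives $d_H(o,x)=\arctanh|x|=\frac12\ln\frac{1+|x|}{1-|x|}$.

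\emph{Proof of (ii).} I would use geodesic polar coordinates centred at $o$. The map $x\mapsto p(x)$ commutes with rotations about the $x_4$-axis, so rays from $o$ in the ball are exactly the hyperbolic geodesic rays from $o$. Likewise, Euclidean spheres $|x|=r$ are exactly the hyperbolic spheres of radius $t=\arctanh r$. In geodesic polar coordinates the hyperbolic metric is $dt^2+\sinh^2 t\, d\omega^2$, where $d\omega^2$ is the round metric on $\Sph^2$. Therefore the volume element is $\sinh^2 t\,dt\,d\omega$. Substituting $t=\arctanh r$ gives $dt = dr/(1-r^2)$ and $\sinh^2 t = r^2/(1-r^2)$, so
\[
\sinh^2 t\,dt\,d\omega=\frac{r^2}{(1-r^2)^2}\,dr\,d\omega=\frac{1}{(1-|x|^2)^2}\,d\lambda_3(x),
\]
using that $r^2\,dr\,d\omega$ is the Euclidean volume element. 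Integrating over $X$ gives (ii).

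\emph{Main obstacle.} The only step needing care is the claim that the hyperbolic metric in polar coordinates about $o$ has the warped form $dt^2+\sinh^2 t\,d\omega^2$ with the same angular coordinate $\omega$ as the Euclidean one. This follows from rotational invariance together with the Gauss lemma. Alternatively, one can verify it directly: the hyperbolic sphere of radius $t$ about $e_4$ in the hyperboloid is the Euclidean round $2$-sphere $\{x_4=\cosh t,\ |(x_1,x_2,x_3)|=\sinh t\}$. If one prefers to avoid this, a direct Jacobian computation also works: pull back the Lorentz metric by $p$ and compute its Gram determinant, which has the eigenvalue $(1-r^2)^{-2}$ in the radial direction and $(1-r^2)^{-1}$, with multiplicity two, in the tangential directions. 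Its square root is again $(1-|x|^2)^{-2}$.
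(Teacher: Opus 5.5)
Your proof is correct. The paper gives no argument for this lemma; it only introduces it as a collection of ``well known properties of the model'', so your derivation supplies the standard justification that the paper omits.

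I checked both parts:
\begin{itemize}
\item The central projection $p(x)=(x,1)/\sqrt{1-|x|^2}$ lands on the upper sheet.
\item From $\cosh d_H(o,x)=-\langle p(x),e_4\rangle_L=(1-|x|^2)^{-1/2}$ you correctly get $\tanh d_H(o,x)=|x|$.
\item The pulled-back metric is $\frac{|dx|^2}{1-|x|^2}+\frac{(x\cdot dx)^2}{(1-|x|^2)^2}$. It has radial eigenvalue $(1-r^2)^{-2}$ and tangential eigenvalue $(1-r^2)^{-1}$ with multiplicity two, so the density is $(1-|x|^2)^{-2}$.
\item This agrees with your polar computation $\sinh^2 t\,dt=r^2(1-r^2)^{-2}\,dr$.
\end{itemize}
You are also right that the domain of integration in (ii) should be $X$; the $g_N$ is a leftover from Lemma~\ref{lem:sphericalvolume}. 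Likewise $\mu_3$ is simply $\vol_3^H$.

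Beyond filling the gap, your approach makes explicit how the projective ball model of Subsection~\ref{subsec:hyperbolic} relates to the hyperboloid model. Centroids and first moments were defined in the hyperboloid model in Section~\ref{sec:prelim}, and the paper leaves this identification implicit. With your map $p$, Lemma~\ref{lem:momentcomphyp} also becomes a direct analogue of the spherical Lemma~\ref{lem:momentcomp}. The signed distance to the plane $\{x_3=0\}$ satisfies $\sinh d^s_H=\langle p(x),(0,0,1,0)\rangle_L=x_3/\sqrt{1-|x|^2}$. This avoids the detour through the hyperbolic Pythagorean theorem that the paper uses.
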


Lemma~\ref{lem:momentcomphyp} is the hyperbolic counterpart of Lemma~\ref{lem:momentcomp}.

\begin{lemma}\label{lem:momentcomphyp}
Let $H$ denote the hyperbolic plane with equation $\{x_3 = 0\}$, oriented towards the normal vector $u=(0,0,1)$. Let $K \subset \HHH^3$ be a convex body. Then the first moment of $K$ with respect to $H$ is
\begin{equation}\label{eq:momentcomphyp}
M_{H}(K) = \int_{x \in K} \frac{x_3}{\left( 1- |x|^2 \right)^{5/2}} d \lambda_3(x).
\end{equation}
\end{lemma}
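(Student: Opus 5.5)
The plan is to mirror the proof of Lemma~\ref{lem:momentcomp}, passing from the projective ball model to the hyperboloid model, where the first moment of Remark~\ref{rem:leverruleHHHd} has a simple linear form. The identification is the central projection from the origin of $\mathbb{L}^4$: a point $x=(x_1,x_2,x_3)\in \inter(\B^3)$ corresponds to
\[
\Phi(x)=\left( \frac{x_1}{\sqrt{1-|x|^2}}, \frac{x_2}{\sqrt{1-|x|^2}}, \frac{x_3}{\sqrt{1-|x|^2}}, \frac{1}{\sqrt{1-|x|^2}} \right) \in \HHH^3 .
\]
A direct check gives $\langle \Phi(x),\Phi(x)\rangle_L=-1$. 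Under $\Phi$ the plane $\{x_3=0\}$ of the model corresponds to $\HHH^3\cap\{x_3=0\}$, which is the hyperbolic plane $H$ with Lorentz unit normal $u=(0,0,1,0)$.

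\textbf{Step 1: the distance term.} The key identity is $\sinh d_H^s(H,q)=\langle q,u\rangle_L$ for every $q\in\HHH^3$. To see this, let $p$ be the foot of the perpendicular from $q$ to $H$. Since $\langle p,u\rangle_L=0$ and $\langle u,u\rangle_L=1$, the vector $u$ is a unit tangent vector at $p$ orthogonal to $H$. The unit-speed geodesic starting at $p$ in direction $u$ is $\gamma(t)=\cosh t\, p+\sinh t\, u$, which is the intersection of $\HHH^3$ with the plane spanned by $p$ and $u$. This geodesic is perpendicular to $H$, so $q=\gamma(t)$ with $t=d_H^s(H,q)$, the sign being fixed by the orientation towards $u$. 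Using $\langle p,u\rangle_L=0$ we get $\langle q,u\rangle_L=\sinh t$. With $q=\Phi(x)$ this becomes
\[
\sinh d_H^s(H,\Phi(x)) = \frac{x_3}{\sqrt{1-|x|^2}} .
\]

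\textbf{Step 2: the volume term.} The hyperbolic volume element in the projective ball model is $(1-|x|^2)^{-2}\,d\lambda_3(x)$, by (ii) of Lemma~\ref{lem:hypvolume}. If one wants to verify this directly, pull back the Lorentz metric by $\Phi$: the metric is $\frac{|dx|^2}{1-|x|^2}$ in directions orthogonal to $x$ and $\frac{|dx|^2}{(1-|x|^2)^2}$ in the radial direction. The square root of the determinant is therefore $(1-|x|^2)^{-1}\cdot(1-|x|^2)^{-1/2}\cdot(1-|x|^2)^{-1/2}=(1-|x|^2)^{-2}$.

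\textbf{Step 3: conclusion.} Substituting Steps 1 and 2 into the integral of Remark~\ref{rem:leverruleHHHd} gives
\[
M_H(K)=\int_{x\in K}\frac{x_3}{\sqrt{1-|x|^2}}\cdot\frac{1}{(1-|x|^2)^2}\,d\lambda_3(x),
\]
which is (\ref{eq:momentcomphyp}).

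The only step that needs genuine justification is Step 1, namely identifying the signed distance to $H$ with the Lorentz inner product against $u$. The perpendicular-geodesic parametrization above handles it, provided one observes that the geodesic through $p$ in direction $u$ meets $H$ orthogonally because $u$ is Lorentz-orthogonal to the tangent space $T_pH=u^{\perp}\cap p^{\perp}$.
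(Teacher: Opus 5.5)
Your proof is correct, but it takes a different route from the paper's.

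The paper never leaves the projective ball model. It takes $y=(x_1,x_2,0)$ as the foot of the perpendicular from $x$ to $H$. It then applies the hyperbolic Pythagorean theorem to the right triangle $o,y,x$, together with $\cosh d_H(o,x)=(1-|x|^2)^{-1/2}$ from (i) of Lemma~\ref{lem:hypvolume}. This gives $\cosh d_H(y,x)=\sqrt{1-|y|^2}/\sqrt{1-|x|^2}$, hence $\sinh d_H(y,x)=|x_3|/\sqrt{1-|x|^2}$, which is then multiplied by the volume element.

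You instead lift to the hyperboloid and prove the linear identity $\sinh d_H^s(H,q)=\langle q,u\rangle_L$. This is the exact hyperbolic counterpart of the identity $\sin d_S(q,G)=\langle q,u\rangle$ used in Lemma~\ref{lem:momentcomp}. Your version gains several things:
\begin{enumerate}
\item The sign of the distance comes out automatically. The Pythagorean computation only yields $|x_3|$, so the sign must be restored by hand.
\item You do not need the fact that the perpendicular foot in the ball model is the vertical projection $(x_1,x_2,0)$. That fact is true, since every perpendicular to $H$ passes through its pole, the ideal point in the $x_3$-direction. But the paper's remark on perpendicularity only covers lines through $o$, and the line through $y$ and $x$ in general does not pass through $o$.
\item The argument works for any hyperplane, not only those through the center of the model.
\item It shows that $M_H(K)=\langle \int_K q\, d\vol_3^H(q),u\rangle_L$, which ties the lemma directly to Definition~\ref{defn:centroidHHd}.
\end{enumerate}
The paper's argument is shorter once the model-specific facts of Lemma~\ref{lem:hypvolume} are granted, and it stays inside the model actually used for the construction. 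Your optional direct check of the volume element in Step 2 is also correct.
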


\begin{proof}
Consider any point $x=(x_1,x_2,x_3) \in \HHH^3$, and let $y=(x_1,x_2,0)$. By (i) of Lemma~\ref{lem:hypvolume}, $\cosh d_H(o,x)= \frac{1}{\sqrt{1-|x|^2}}$ and $\cosh d_H(o,y) = \frac{1}{\sqrt{1-|y|^2}}$.  By the hyperbolic Pythagorean Theorem, $\cosh d_H(y,x) = \frac{\cosh d_H(o,x)}{\cosh d_H(o,y)} = \frac{\sqrt{1-|y|^2}}{\sqrt{1-|x|^2}}$. Thus, $\sinh d_H(y,x) = \sqrt{\cosh^2 d_H(o,x)-1} = \frac{x_3}{\sqrt{1-|x|^2}}$. Combining this formula with the volume element appearing in (ii) of Lemma~\ref{lem:hypvolume}, the assertion follows.
\end{proof}

Consider a ball $B$ of radius $0 < R < 1$ centered at $o$. This ball represents a hyperbolic ball of radius $\arctanh R$. In our construction we find a slight deformation $K$ of $B$ such that its (hyperbolic) centroid is $o$, and it has only two equilibrium points. The construction, using the same family $\mathcal{F}$ defined in Subsection~\ref{subsec:spherical} in $H_N$, is entirely analogous to the one in Subsection~\ref{subsec:spherical}, setting $S= \Sph^3$, and thus, we omit it.

\subsection{The case when $\V^3$ is a rotationally symmetric $3$-dimensional normed space}\label{subsec:normed}

Let $M$ be the unit ball of $\V^3$. Without loss of generality, we may assume that $M$ is symmetric to the $x_3$-axis of the space. We make use of the ambient Euclidean structure of the space, and denote by $\rho_M(u) : \Sph^2 \to \Re$ the \emph{radial function} of $M$ \cite{Schneider}, defined as
\[
\rho_M(u) = \sup \{ \lambda: \lambda u \in M \}.
\]
We follow the construction in Subsections~\ref{subsec:spherical} and \ref{subsec:hyperbolic} with one exception: we define the function $R_{c,d}(u)$ for any $u \in \Sph^2$ by
\begin{equation}\label{eq:Rnormed}
R_{c,d}(u) = R \rho_M(u)(1 + d \rho_c(u)).
\end{equation}
Using this definition, we let
\[
K(c,d) = \left\{ \lambda u : u \in \Sph^2, 0 \leq \lambda \leq R_{c,d}(u) \right\}.
\]
Thus, for small values of $d>0$, $K(c,d)$ is a slight deformation of the ball $R M$. The proof follows the one in Subsection~\ref{subsec:spherical}. We present the proofs of only two lemmas, as the rest of the lemmas in that subsection can be proved analogously to their counterparts in Subsection~\ref{subsec:spherical}.

\begin{lemma}\label{lem:equilibrianormed}
For any $0 < c < 1$ and $ 0 < d < 1$, the only equilibrium points of $K(c,d)$ with respect to $o$ are $R_{c,d}(u_N) u_N$ and $R_{c,d}(u_S) u_S$, where $u_N=(0,0,1) \in \Sph^2$ and $u_S=(0,0,-1) \in \Sph^2$.
\end{lemma}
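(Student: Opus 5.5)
We reduce Lemma~\ref{lem:equilibrianormed} to the Euclidean result, Lemma~\ref{lem:equilibria}, through a change of variables that turns $RM$ into a Euclidean ball. In the normed space, $q \in \bd(K(c,d))$ is an equilibrium point with respect to $o$ exactly when the norm $\|\cdot\|_M$, restricted to $\bd(K(c,d))$, has a critical point at $q$. Here $K(c,d)$ is $C^2$ and $M$ is smooth, as assumed in (b) of Theorem~\ref{thm:3d}.

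Parametrize $\bd(K(c,d))$ by $u \mapsto R_{c,d}(u)u$ for $u \in \Sph^2$. Since $\rho_M(u)\|u\|_M = 1$, we get
\[
\| R_{c,d}(u) u \|_M = R \rho_M(u)\,(1+d\rho_c(u))\,\|u\|_M = R\,(1+d\rho_c(u)).
\]
The parametrization $u\mapsto R_{c,d}(u)u$ is a $C^1$ diffeomorphism of $\Sph^2$ onto $\bd(K(c,d))$ because $K(c,d)$ is star-shaped with $C^2$ radial function. Hence the critical points of the normed distance on $\bd(K(c,d))$ correspond exactly to the critical points of $u \mapsto \rho_c(u)$ on $\Sph^2$, for any $d>0$. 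The factor $\rho_M$ cancels completely, and this holds for every norm, not only rotationally symmetric ones.

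The same computation for the Euclidean body $K^E(c,d)=\{\lambda u : 0\le\lambda\le R(1+d\rho_c(u))\}$ gives the Euclidean distance $R(1+d\rho_c(u))$ at $R(1+d\rho_c(u))u$. Equilibria of $K^E(c,d)$, for smooth $K^E$, are precisely the critical points of the distance on its boundary. So Lemma~\ref{lem:equilibria}, equivalently \cite[Lemma 4]{DLV}, says that $\rho_c$ has exactly two critical points on $\Sph^2$, namely $u_N$ and $u_S$. This is the step where Lemma~\ref{lem:equilibria} is invoked, and it is a statement about $\rho_c$ alone, independent of the ambient space. Transporting back, the only equilibria of $K(c,d)$ are $R_{c,d}(u_N)u_N$ and $R_{c,d}(u_S)u_S$.

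The main point to check is the first step, which the plan relies on. We must verify that "a homothet of $M$ centered at $o$ touches $K$ at $q$" is equivalent to criticality of $\|\cdot\|_M|_{\bd K}$ at $q$. If the level sets of the norm and $\bd(K)$ share a tangent plane at $q$, the support plane of $K$ is Birkhoff orthogonal to $[o,q]$, because it supports the homothet of $M$ at $q$. Conversely, if $q$ is critical for $\|\cdot\|_M|_{\bd K}$, smoothness of $M$ forces the tangent plane of $\bd(K)$ at $q$ to equal that of the level set. Smoothness of $M$ makes the norm $C^1$ away from $o$. Convexity of $K(c,d)$ is not needed here, only smoothness; if one insists on the support-plane definition, Lemma~\ref{lem:convexity} (in its normed analogue) supplies it for small $d$.
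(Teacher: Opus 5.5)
Your proposal is correct and follows essentially the paper's route. The paper equates the $\theta$- and $\varphi$-derivatives of $R_{c,d}$ at $u_0$ with those of the radial function of the tangent homothet $R(1+d\rho_c(u_0))M$; the $\rho_M'$ terms cancel, giving $\nabla\rho_c(u_0)=0$, which contradicts \cite[Lemma 4]{DLV}. You reach the same reduction more directly through the identity $\|R_{c,d}(u)u\|_M=R(1+d\rho_c(u))$. Your coordinate-free version also shows the poles are equilibria without invoking the symmetry of $K(c,d)$, and it makes explicit that rotational symmetry of $M$ is not used in this lemma; the paper needs it only in the centering argument.
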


\begin{proof}
Note that by the symmetry of $K(c,d)$, the points $R_{c,d}(u_N) u_N$ and $R_{c,d}(u_S) u_S$ are equilibrium points of $K(c,d)$.
Assume that for some $-\frac{\pi}{2} < \theta_0 < \frac{\pi}{2}$ and setting
$u_0 = (\cos \theta_0 \cos \varphi_0, \cos \theta_0 \sin \varphi_0, \sin \theta_0) \in \Sph^2$,
$K(c,d)$ has an equilibrium point at $R_{c,d}(u_0) u_0 \in \bd(K(c,d))$. Then the tangent plane $H$ of $K(c,d)$ at this point is Birkhoff orthogonal to $u_0$. In other words, $H$ is tangent at $R_{c,d}(u_0) u_0$ not only to $K(c,d)$, but also to $R (1 + d \rho_c(u_0) M$, i.e. the homothetic copy of $M$ centered at $o$ and containing  $R_{c,d}(u_0) u_0$ in its boundary. Thus, we have
$\left(R_{c,d}\right)'_{\theta}(u_0) = R(1 + d \rho_c(u_0) \left( \rho_M \right)'_{\theta}(u_0)$, and $\left(R_{c,d}\right)'_{\varphi}(u_0) = R(1 + d \rho_c(u_0) \left( \rho_M \right)'_{\varphi}(u_0)$.
By the definition of $R_{c,d}$, from this we obtain that $(\rho_c)'_{\theta} (u_0) = (\rho_c)'_{\varphi} (u_0) = 0$. Hence, by the Euclidean result in \cite[Lemma 4]{DLV}, we have a contradiction.
\end{proof}

\begin{lemma}\label{lem:centeringnormed}
There exist constants $0 < c_1 < c_2 < 1$, $0 < d_0 < 1$ and a function $F:(0,d_0) \to [c_1,c_2]$ such that for any $d \in (0,d_0)$,
$c(K(F(d),d))=o$.
\end{lemma}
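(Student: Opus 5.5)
By the rotational symmetry of $M$ about the $x_3$-axis, I will exploit the rotational symmetry of $K(c,d)$ exactly as in the proof of Lemma~\ref{lem:centering}. Since $\rho_c$ depends only on $\theta$ and on $\varphi$ through $\cos^2\varphi,\sin^2\varphi$, and $\rho_M$ is independent of $\varphi$, $K(c,d)$ is symmetric with respect to the planes $\{x_1=0\}$ and $\{x_2=0\}$. Hence $c(K(c,d))$ lies on the $x_3$-axis. By Remark~\ref{rem:normedcentroid}, it suffices to find $c$ with $M_3(c,d):=\int_{K(c,d)} x_3\, d\lambda_3(x)=0$. Indeed, in a normed space the centroid is the Euclidean one, so the Euclidean first moment with respect to $\{x_3=0\}$ can be used directly. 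In spherical coordinates,
\[
M_3(c,d)=\int_0^{2\pi}\!\!\int_{-\pi/2}^{\pi/2}\frac{R^4\rho_M^4(\theta)\,(1+d\rho_c)^4}{4}\,\sin\theta\cos\theta\,d\theta\,d\varphi .
\]

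First I will check that $M_3(c,0)=0$ for all $c$. The hard point here is that $\rho_M(\theta)$ is generally not symmetric in $\theta\mapsto-\theta$. However, $M$ is $o$-symmetric, and together with the rotational symmetry this gives $\rho_M(-u)=\rho_M(u)$, so $\rho_M(\theta)=\rho_M(-\theta)$. Thus $RM$ has centroid $o$, and the integrand $\rho_M^4\sin\theta\cos\theta$ is odd.

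Next I will differentiate in $d$ at $d=0$:
\[
\partial_d M_3(c,d)\big|_{d=0}=R^4\int_0^{2\pi}\!\!\int_{-\pi/2}^{\pi/2}\rho_M^4(\theta)\,\rho_c(\theta,\varphi)\sin\theta\cos\theta\,d\theta\,d\varphi .
\]
For $c=1$ we have $\rho_1=\sin\theta$, so this equals $2\pi R^4\int\rho_M^4\sin^2\theta\cos\theta\,d\theta>0$. For $c\to0^+$, dominated convergence (using $|\rho_c|\le1$ and bounded $\rho_M$) gives the limit $R^4\iint\rho_M^4(\theta)\rho_0(\theta,\varphi)\sin\theta\cos\theta$.

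This is the main obstacle. Unlike in the spherical case, the weight $\rho_M^4(\theta)$ is no longer a constant, so the sign cannot be read off from the numerical value $-4.63\ldots$. My plan is to integrate $\rho_0$ over $\varphi$ first, obtaining an explicit function $h(\theta)=\int_0^{2\pi}\rho_0\,d\varphi$. Since $\rho_0(\theta,\cdot)$ is a Möbius-type expression in $\tan^2\varphi$, this can be done in closed form; I expect $h(\theta)=2\pi\,\frac{a-b}{a+b}$ up to a square-root form, with $a=(\frac\pi2-\theta)^2$ and $b=(\frac\pi2+\theta)^2$. I will then aim to show that $h(\theta)\sin\theta\cos\theta\le0$ pointwise, with strict inequality on a set of positive measure. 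Indeed, $\rho_0$ favours negative values where $\theta>0$ and positive values where $\theta<0$, reflecting that mass is pushed toward the south. If pointwise nonpositivity holds, the limit is negative for any positive weight $\rho_M^4$, and hence for every rotationally symmetric norm.

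Given the two signs, I will conclude exactly as in Lemma~\ref{lem:centering}. Choose $c_0$ small with a negative derivative. Then for small $d$ we have $M_3(c_0,d)<0<M_3(1,d)$; to stay inside $(0,1)$ I replace $1$ by some $c_2<1$ close to $1$, using continuity of the derivative in $c$. Continuity of $M_3$ in $(c,d)$ and the intermediate value theorem then provide, for each $d\in(0,d_0)$, a value $F(d)\in[c_1,c_2]$ with $M_3(F(d),d)=0$.
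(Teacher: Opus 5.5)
Your proposal is correct and follows the paper's proof essentially step for step. The steps match: reduction to the Euclidean first moment about $\{x_3=0\}$, its vanishing at $d=0$ by $o$-symmetry of $M$, a positive sign at $c=1$ from $\rho_1=\sin\theta$, and dominated convergence to $\rho_0$ followed by integration in $\varphi$ first. Your anticipated closed form is exactly the paper's $\int_0^{2\pi}\rho_0\,d\varphi=2\pi\frac{a-b}{a+b}=-\frac{8\pi^2\theta}{\pi^2+4\theta^2}$, so $h(\theta)\sin\theta\cos\theta\le 0$ pointwise, as you hoped.

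Your version is slightly tighter than the paper's in three details: the exponent $4$ rather than $3$ in the moment integral, the pointwise sign argument instead of an appeal to evenness, and the move from $c=1$ to some $c_2<1$ before applying the intermediate value theorem.
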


\begin{proof}
We start the proof with the observation that as $K(c,d)$ is axially symmetric to the $x_3$-axis, its centroid is of the form $c(K(c,d))=(0,0,z(c,d))$. Thus, $c(K)=o$ if and only if the first moment of $K$ with respect to the plane $H_3$ with equation $\{ x_3 = 0 \}$ is $0$. We denote this first moment by $M_3(K)$.

Changing to spherical coordinates, up to a constant factor independent of $c,d$ and setting $x=(x_1,x_2,x_3)$, $M(K_m(c,d))$ can be written as
\begin{multline*}
M_3(K(c,d)) = \int_{x \in K} x_3  \, d \lambda_3(x) =\\
= 
\int_{0}^{2\pi}\int_{-\frac{\pi}{2}}^{\frac{\pi}{2}}\int_{0}^{R \rho_M(1 + d \rho_c)} r^3 \sin \theta \cos \theta \,dr\,d\theta\,d\varphi,
\end{multline*}
implying that
\begin{equation}\label{eq:firstmomentnormed}
M_3(K(c,d)) = \int_{0}^{2\pi}\int_{-\frac{\pi}{2}}^{\frac{\pi}{2}} \frac{1}{4} \left( R \rho_M(1 + d \rho_c) \right)^3 \sin \theta \cos \theta \,d\theta\,d\varphi .
\end{equation}

We note that as $M$ is $o$-symmetric, $M_3(K(c,0))=0$ for all $0 < c \leq 1$. Furthermore, from this it also follows that
\begin{equation}\label{eq:firstmomentnormedmod}
M_3(K(c,d)) = d \int_{0}^{2\pi}\int_{-\frac{\pi}{2}}^{\frac{\pi}{2}}  R^4 \rho_M^4 \left( \rho_c + \frac{3}{2} d \rho_c^2 + d^2 \rho_c^3 + \frac{d^3}{4} \rho_c^4\right) \sin \theta \cos \theta \,d\theta\,d\varphi .
\end{equation}

Observe that the integrand in (\ref{eq:firstmomentnormed}), as well as its partial derivative with respect to $d$, is continuous on the domain $0 < c \leq 1$ and $0 \leq d < 1$. Hence, the same holds for $M_3(K(c,d))$.
Note that $\rho_1(\theta, \varphi) = \sin(\theta)$. Thus,
\begin{multline*}
M_3(K(1,d)) =\\
= d \int_{0}^{2\pi}\int_{-\frac{\pi}{2}}^{\frac{\pi}{2}}  R^4 \rho_M^4 \left( \sin^2 \theta + \frac{3}{2} d \sin^3 \theta + d^2 \sin^4 \theta + \frac{d^3}{4} \sin^5 \theta\right) \cos \theta \,d\theta\,d\varphi
\end{multline*}
For any $-\frac{\pi}{2} \leq \theta \leq \frac{\pi}{2}$, set $\hat{\rho}_M(\theta) = \int_{0}^{2\pi} \rho_M^4(\theta,\varphi) \, d \varphi$. Then, as $M$ is $o$-symmetric, we have that $\hat{\rho}_M(\theta)$ is an even function. Thus, as the function $\theta \mapsto \sin^k \theta$ is even if $k$ is even and odd if $k$ is odd, it follows that
\begin{equation}\label{eq:firstmomentc1_2normed}
M_3(K_m(1,d)) = d \int_{-\frac{\pi}{2}}^{\frac{\pi}{2}}  2 R^4 \hat{\rho}_M \left( 2 \sin^2 \theta  + d^2 \sin^4 \theta \right) \cos \theta \,d\theta,
\end{equation}
which is positive for all values $0<d \leq 1$. In particular, $M_3(K(c,d))$ is positive in a neighborhood of the point $(c,d)=(1,0)$.

Next, we consider the value of $M_3(K(c,d))$ near the point $(c,d)=(0,0)$. Recall that $M_3(K(c,0))=0$ for any $0 < c \leq 1$. Thus, to show that $M_3(K(c,d))$ is negative for sufficiently small values of $c$ and $d$, it is sufficient to show that
\[
\bar{M}(c,d)= \int_{0}^{2\pi}\int_{-\frac{\pi}{2}}^{\frac{\pi}{2}}  \rho_M^4 \rho_c \sin \theta \cos \theta \,d\theta\,d\varphi < 0
\]
in such a region. Now, using the notation $\rho_0$ introduced in Subsection~\ref{subsec:spherical} and applying Lebesgue's dominated convergence theorem, we obtain that
\[
\lim_{c \to 0^+} \bar{M}(c,d) = \int_{0}^{2\pi}\int_{-\frac{\pi}{2}}^{\frac{\pi}{2}}  \rho_M^4 \rho_0 \sin \theta \cos \theta \,d\theta\,d\varphi .
\]
Now, since $\rho_M$ is independent of $\varphi$, it follows that
\[
\lim_{c \to 0^+} \bar{M}(c,d) = - \int_{-\frac{\pi}{2}}^{\frac{\pi}{2}} \rho_M^4 \frac{8 \theta \pi^2}{\pi^2+4 \theta^2} \sin \theta \cos \theta \,d\theta,
\]
which, as the integrand is an even function, is negative.
Combining this result with our investigation near the point $(c,d)=(1,0)$, the assertion follows.
\end{proof}

\end{document}